\newcommand*{\addFileDependency}[1]{
  \typeout{(#1)}
  \@addtofilelist{#1}
  \IfFileExists{#1}{}{\typeout{No file #1.}}
}
\newcommand*{\myexternaldocument}[1]{%
    \externaldocument{#1}%
    \addFileDependency{#1.tex}%
    \addFileDependency{#1.aux}%
}
\newcounter{Angle}
\DeclareMathOperator{\po}{po}
\newtheorem{defn}{Definition}
\newtheorem{lemma}{Lemma}
\newtheorem{example}{Example}
\newtheorem{conjecture}{Conjecture}
\newtheorem{thm}{Theorem}
\newtheorem{prop}{Proposition}
\newcommand{\ci}{\mathrel{\perp\mspace{-10mu}\perp}}
\newcommand{\incomp}{\not\lessgtr}
\newcommand{\cH}{\mathcal{H}}
\newcommand{\cI}{\mathcal{I}}
\newcommand{\cM}{\mathcal{M}}
\newcommand{\cN}{\mathcal{N}}
\newcommand{\cP}{\mathcal{P}}
\newcommand{\bbP}{\mathbb{P}}
\DeclareMathOperator{\pa}{pa}
\DeclareMathOperator{\an}{an}
\DeclareMathOperator{\spo}{sp}
\DeclareMathOperator{\pre}{pre}
\DeclareMathOperator{\skel}{skel}
\newcommand{\leftarrowcirc}{\kern1.5pt\hbox{$\leftarrow$}\kern-1.5pt\hbox{$\circ$}\kern1.5pt}
\newcommand{\rightarrowcirc}{\kern1.5pt\hbox{$\circ$}\kern-1.5pt\hbox{$\rightarrow$}}
\newcommand{\circedge}{\kern1.5pt\hbox{$\circ$}\kern-1.5pt\hbox{$-$}\kern-1.5pt\hbox{$\circ$}\kern1.5pt}
\newcommand{\notadj}{\not\sim}
\begin{document}

%
\runningtitle{Ordering-Based Causal Structure Learning in the Presence of Latent Variables}

%

\twocolumn[

\aistatstitle{
Ordering-Based Causal Structure Learning \\in the Presence of Latent Variables
}

\aistatsauthor{ Daniel Irving Bernstein$^*$ \And Basil Saeed$^*$ \And Chandler Squires$^*$ \And Caroline Uhler}
\aistatsaddress{ MIT \And MIT \And MIT \And MIT}
]

\begin{abstract}
  We consider the task of learning a causal graph in the presence of latent confounders given i.i.d.~samples from the model. While current algorithms for causal structure discovery in the presence of latent confounders are constraint-based, we here propose a hybrid approach. We prove that under assumptions weaker than faithfulness, any sparsest independence map (IMAP) of the distribution belongs to the Markov equivalence class of the true model. This motivates the \emph{Sparsest Poset} formulation - that posets can be mapped to minimal IMAPs of the true model such that the sparsest of these IMAPs is Markov equivalent to the true model. Motivated by this result, we propose a greedy algorithm over the space of posets for causal structure discovery in the presence of latent confounders and compare its performance to the current state-of-the-art algorithms FCI and FCI+ on synthetic data. 
\end{abstract}

\section{INTRODUCTION}
Determining the causal structure between variables from observational data of these variables is a central task in many applications \citep{friedman2000using,robins2000marginal,heckerman1995real}. Causal structure is often modelled by a directed acyclic graph (DAG), where the nodes are associated with the variables of interest and the edges represent the direct causal effects these variables have on one another. In most realistic settings, only some of the variables in an environment are observed at any given time, i.e., only partial observations are available, leading to confounding effects on the observed variables. 
In such settings, a class of mixed graph models, called \emph{maximal ancestral graphs} (\emph{MAGs}) containing directed edges (representing direct causal effects), bidirected edges (representing the effect of a latent confounder on two variables) and undirected edges (representing selection bias), have been proposed to model the structure among the observed variables \citep{richardson2002ancestral}. 
In this paper, we concentrate on latent confounders and are concerned with the recovery of mixed graphs containing directed and bidirected edges.

Current methods for estimating MAGs are constraint-based generalizing the prominent PC algorithm for estimating DAGs in the fully observed setting \citep{spirtes2000causation}. This includes the \emph{Fast Causal Inference} (\emph{FCI}) algorithm \citep{spirtes2000causation} and its variants: the \emph{Really Fast Causal Inference} (\emph{RFCI}) algorithm \citep{colombo2012learning}, and the \emph{FCI+} algorithm \citep{claassen2013learning}. These methods depend on the faithfulness assumption to guarantee soundness and completeness, which has been shown to be restrictive \citep{uhler2013geometry}. 
In settings without latent confounders, studies have shown that score-based approaches, including the prominent GES algorithm \citep{chickering2002optimal}, achieve superior performance to constraint-based approaches~\citep{nandy2018high}. 
In purely constraint-based approaches such as PC, mistakes made in early stages of the algorithm tend to propagate and lead to later mistakes. Score-based approaches (which are usually greedy) are often more resilient to error propagation, since early mistakes only affect the local structure of the search space but do not affect the scores of later graphs. 
This motivates the development of an algorithm for causal structure discovery in the presence of latent confounders that shares this resilience with score-based approaches.

In this paper, we propose the \emph{sparsest poset (SPo) algorithm} for causal structure discovery in the presence of latent confounders. Since this algorithm uses both a scoring criterion and conditional independence testing to learn the model, we refer to it as a \emph{hybrid} method. The key idea that we use is that every MAG containing only directed and bidirected edges is consistent with a \emph{partial order} of the observed variables (\emph{poset}) and hence the  problem of causal structure discovery can be recast as the problem of learning a poset.
In particular, our main contributions are as follows:
\begin{itemize}
    \item We define a map that associates  to each partial order of the observed variables a MAG, so that the sample-generating distribution is Markov~to~it.
    \item We prove that the sparsest such MAG is Markov equivalent to the true graph under conditions that are strictly weaker than faithfulness.
    \item We propose a greedy search over the space of posets based on the legitimate mark changes   by \cite{zhang2012transformational}  to move effectively between MAGs associated with different posets to find the poset yielding the sparsest graph.
    \item  By comparing the performance and speed of our algorithm to FCI and FCI(+) on synthetic data, we show that it is competitive to current stat-of-the-art methods for causal structure discovery with latent confounders.
\end{itemize}

\section{PRELIMINARIES AND RELATED WORK}
\label{section:background}
In the following, we review relevant concepts and related work; see also Appendix~\ref{appendix:graph-theory}.


\subsection{Directed Maximal Ancestral Graphs}
\label{section:maximal-ancestral}
All graphs in this paper can have directed and bidirected edges.
Let $G = (V,D,B)$ be a graph with vertices $V$, directed ($\rightarrow$) edges $D$, and bidirected ($\leftrightarrow$) edges $B$. 
We use $\skel(G)$ to denote the \emph{skeleton} of $G$, i.e., the undirected graph obtained by replacing all edges with undirected edges. We denote the number of edges of $G$ by $|G| := |D| + |B|$.
We use $\pa_G(i)$, $\spo_G(i)$, and $\an_G(i)$ respectively to denote the parents, spouses, and ancestors of a node $i$ in $G$, where we use the typical definitions as in~\cite{lauritzen1996graphical}.
$G$ is said to be \emph{ancestral} if it has no directed cycles, and whenever there is a bidirected edge $i\leftrightarrow j$ in $G$, there is no directed path from $i$ to $j$ \citep{richardson2002ancestral}. While ancestral graphs have been defined to also allow for undirected edges, we restrict our treatment to ancestral graphs with only directed and bidirected edges, which we will call \emph{directed ancestral graphs}.

\cite{richardson2002ancestral} generalized the standard notions of $d$-separation and $d$-connectedness for DAGs (see e.g.~\citep{lauritzen1996graphical}) to $m$-separation and $m$-connectedness for ancestral graphs. We write $A \ci_G B \mid C$ to indicate that $A$ and $B$ are $m$-separated given $C$ in $G$. We denote the set of all $m$-separation relations of a graph $G$ by $\cI(G)$.
Unlike for DAGs, in the case of ancestral graphs it is possible to have a pair of non-adjacent vertices $i$ and $j$ without an $m$-separation relation of the form $i\ci_G j \mid S$ for any $S\subseteq V\setminus\{i,j\}$ (see~\cite{richardson2002ancestral}). 
An ancestral graph is \emph{maximal} if every non-adjacent pair $i$ and $j$ satisfies $i \ci_G j \mid S$ for some $S \subseteq V\setminus\{i,j\}$.
\cite{richardson2002ancestral} showed that associated to every graph $G$ is a unique maximal supergraph, denoted $\overline{G}$, with the same set of $m$-separation statements.
They also give an efficient procedure for computing $\overline{G}$ from $G$.
We refer to a directed ancestral graph that is maximal as a \emph{directed maximal ancestral graph} (\emph{DMAG}).


\subsection{Markov Properties of DMAGs}

Given a DMAG $G=(V,D,B)$, we associate to each vertex $i\in V$ a random variable $X_i$ such that the random vector  $X_V = (X_i : i\in V)$ has joint distribution $\bbP$. This distribution can be connected to the separation relations in $G$ via the \emph{Markov property} \citep{richardson1999markov}; namely, $\mathbb{P}$ is \emph{Markov} with respect to the DMAG $G$ if every $m-$separation relation in $G$ implies the corresponding conditional independence relation in $\bbP$, i.e.
$$A\ci_G B \mid C \Rightarrow X_A\ci_\bbP X_B \mid X_C $$
for all disjoint $A,B,C\subseteq V,$ where $\ci_\bbP$ denotes independence in $\bbP$.
Denoting by $\cI(\mathbb{P})$ the set of all CI relations in $\bbP$, the Markov property is then equivalent to $\cI(G) \subseteq \cI(\bbP)$. In this case, $G$ is called an \emph{independence map} (\emph{IMAP}) of $\bbP$; 
$G$ is called a \emph{minimal IMAP} of $\bbP$ if there is no edge of $G$ that can be deleted while keeping $G$ both maximal and an IMAP of $\bbP$.

Graphs $G$ and $H$ are said to be \emph{Markov equivalent} if $\cI(G) = \cI(H)$. The set of all graphs that are Markov equivalent to a given $G$ will be denoted $\cM(G)$. \cite{spirtes1996polynomial} provided a combinatorial characterization of graphs in the same Markov equivalence class (MEC). 
To do this, they used the notion of \emph{discriminating paths for a vertex $k$}: a path $\gamma = \langle i, \ldots, k, j \rangle$ between non-adjacent $i$ and $j$ is \emph{discriminating for $k$} if every node between $i$ and $k$ is both a collider and a parent of $j$, and there is at least one node between $i$ and $k$. 
\cite{spirtes1996polynomial} show that $G$ and $H$ are Markov equivalent if and only if they have the same skeleta, the same v-structures, and if for any path $\gamma$ that is discriminating for $k$ in both $G$ and $H$, $k$ is a collider on $\gamma$ in $G$ if and only if $k$ is a collider on $\gamma$ for $H$. 


\cite{zhang2012transformational} provided a transformational characterization for the Markov equivalence class of a DMAG that will play an essential role in this paper. For this, they called the transformation of the edge $i \rightarrow j$ in $G$ into $i \leftrightarrow j$, or of the edge $i \leftrightarrow j$ to $i \rightarrow j$ a \emph{legitimate mark change} if there is no other directed path from $i$ to $j$ in $G$, $\pa_G(i) \subseteq \pa_G(j)$, $\spo_G(i) \setminus \{j\} \subseteq \pa_G(j) \cup \spo_G(j)$, and there is no discriminating path for $i$ on which $j$ is the endpoint adjacent to $i$. They showed that $G$ and $H$ are Markov equivalent if and only if there is a sequence of legitimate mark changes from $G$ to $H$.


\subsection{Causal Structure Discovery Algorithms}
\label{section:structure-learning}

The problem of causal structure discovery in the setting of latent confounders is to recover the Markov equivalence class of the underlying DMAG $G^*$ from samples on the observed variables. In particular, when the sample size $n\to\infty$, the problem is to recover the Markov equivalence class of the DMAG $G^*$ from $\cI(\mathbb{P})$. The most prominent existing algorithms for learning DMAGs\footnote{In fact, all of these methods are able to estimate \emph{MAGs}, which may include undirected edges to model selection bias.} are the Fast Casual Inference (FCI) algorithm \citep{spirtes2000causation} and its variants, most notably FCI+ \citep{claassen2013learning}, which has polynomial time complexity for sparse graphs while retaining large-sample consistency.
All of these methods are constraint-based; they start by estimating the skeleton of the graph based on the results of CI tests, then use the results of those CI tests to determine some edge orientations. However, constraint-based methods require the faithfulness assumption \citep{zhang2002strong}, which is restrictive in practice, and faithfulness violations lead to the removal of too many edges~\cite{uhler2013geometry}.



In the DAG setting (i.e., no latent confounders) it has been shown that score-based approaches may require weaker assumptions for consistency \citep{van2013ell_, raskutti2018learning} and usually achieve superior performance for a given sample size~\citep{nandy2018high}.
This motivates the development of an algorithm for causal structure discovery that shares these properties with score-based approaches, and works in the presence of latent confounders.
Existing score-based approaches that can handle latent confounders require parametric assumptions.
For example, \cite{shpitser2012parameter} requires discreteness and
\cite{tsirlis2018scoring,nowzohour2017distributional} requires Gaussianity.

A particular approach that will play an important role in this paper is the \emph{Sparsest Permutation} algorithm, 
introduced in \cite{raskutti2018learning},
which associates to each permutation $\pi$ a DAG $G_\pi$, which is a minimal IMAP of the data-generating distribution.
The Sparsest Permutation algorithm is a hybrid method, combining aspects of the constraint- and score-based paradigms. 
Like many constraint-based methods, it does not require parametric assumptions,
and like many score-based methods, it seems resilient to error-propagation.
Since under restricted faithfulness assumptions the sparsest such $G_\pi$ is Markov equivalent to the true DAG $G^*$, this motivates a greedy search over the space of permutations to determine the sparsest $G_\pi$. In fact, in~\cite{solus2017consistency} the authors proved that starting in any minimal IMAP there exists a sequence of minimal IMAPs connecting it to the true DAG $G^*$ by legitimate mark changes such that the number of edges is weakly decreasing. Hence the \emph{Greedy Sparsest Permutation} (\emph{GSP}) algorithm is consistent for causal structure discovery in the fully observed setting. 

In the following section, we generalize the sparsest permutation algorithm to the setting with latent confounders by using posets instead of permutations. In particular, we show that under restricted faithfulness assumptions the DMAG associated with the \emph{Sparsest Poset} is Markov equivalent to the true DMAG. This motivates the introduction of a greedy search over posets, which we term \emph{Greedy Sparsest Poset} (\emph{GSPo}) algorithm and introduce in Section \ref{section:greedy}. Finally, in Section \ref{section:simulation} we analyze its performance and compare it to the FCI algorithms on synthetic data.


\section{SPARSEST POSET}
\label{section:sparsity}
This section contains our main results. We first introduce the restricted faithfulness notion required for our results and show that it is strictly weaker than the standard faithfulness assumption. 
Then we introduce a map from posets to DMAGs which are  minimal IMAPs of the data-generating distribution, and show that the sparsest DMAG in the image of this map is Markov equivalent to the true DMAG $G^*$.

\subsection{Restricted Faithfulness}
\label{subsec:faithfulness}

An important assumption for constraint-based methods to recover $G^*$ from $\cI(\bbP)$ is the \emph{faithfulness assumption}, which asserts that $\cI(\bbP) = \cI(G^*)$. In practice, this assumption is very sensitive to hypothesis testing errors for inferring CI relations from data and almost-violations are frequent~\citep{uhler2013geometry}. This motivates studying restricted versions of the faithfulness assumption~\cite{ramsey2012adjacency,raskutti2018learning}. In the following, we introduce a restricted faithfulness assumption for DMAGs, which we show is sufficient for learning DMAGs.


\begin{defn}
    A distribution $\bbP$ is \emph{restricted-faithful} to a DMAG $G = (V,D,B)$ if it is Markov to $G$ satisfying
    \begin{enumerate}
    \vspace{-0.2cm}
        \item \emph{Adjacency-faithfulness}: If $(i, j)\in B\cup D$, then $X_i \not\ci_\bbP X_j \mid X_S$ for any $S \subseteq V \setminus \{i,j\}$;
        \vspace{-0.1cm}
        \item \emph{Orientation-faithfulness}: If $i - k - j$ is contained in the skeleton of $G$ and $i$ is m-connected to $j$ given some subset $S \subseteq V \setminus \{i,j\}$, then $X_i \not\ci_\bbP X_j \mid X_S$.
        \vspace{-0.5cm}
        \item \emph{Discriminating-faithfulness}: If $\langle i, \dots, k, j\rangle$ is a discriminating path in $G$ and $i$ is m-connected to $j$ given some subset $S \subseteq V \setminus \{i,j\}$, then $X_i \not\ci_\bbP X_j \mid X_S$.
        \vspace{-0.2cm}
    \end{enumerate}
\end{defn}

\begin{figure}[!t]
    \vspace{-0.2cm}
    \begin{subfigure}{0.59\linewidth}
    \begin{align*}
        X_i &= X_{\pa_{G^*}(i)} + \varepsilon_i,~i \neq 6\\
        X_6 &= X_3-X_5 + \varepsilon_6
    \end{align*}
    \caption{Structural equation model}
    \label{fig:restrictedFaithfulModel}
    \end{subfigure}
    \begin{subfigure}{0.39\linewidth}
    \centering
    \includegraphics[width=\linewidth]{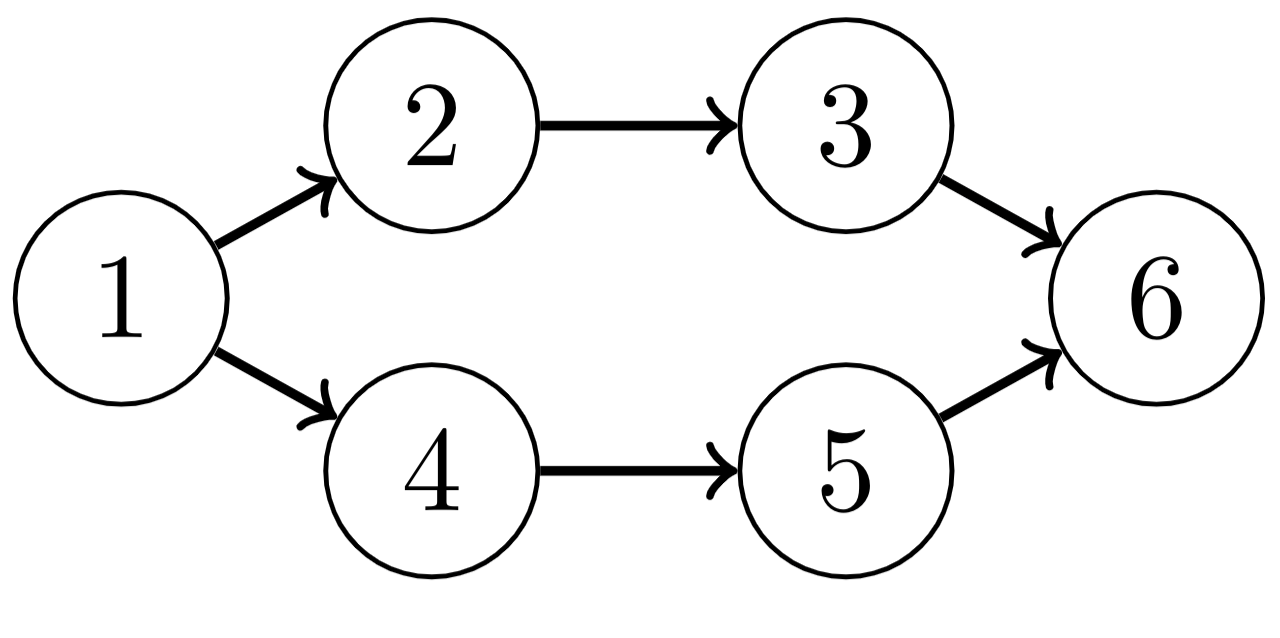}
    \caption{$G^*$}
    \label{fig:restrictedFaithfulGraph}
    \end{subfigure}
    \caption{A structural equation model giving rise to a joint distribution $\bbP$ that is
    restricted-faithful, but not faithful, to the graph $G^*$.}\label{fig:restrictedFaithful}
\end{figure}


It is clear that faithfulness implies restricted-faithfulness.
Moreover, restricted-faithfulness is a strictly weaker condition -
there exist joint distributions $\bbP$ that are restricted-faithful to a DMAG
that are not faithful.
For example, let $\bbP$ be given by the structural equation model in Figure \ref{fig:restrictedFaithfulModel},
where each $\varepsilon_i \sim \cN(0, 1)$.
Then $\bbP$ is restricted-faithful, but not faithful to the graph $G^*$ displayed in Figure \ref{fig:restrictedFaithfulGraph}.
To see that $\bbP$ is not faithful to $G^*$, note that $X_1 \ci_\bbP X_6$ even though $1$ and $6$ are not m-separated in $G^*$.

\subsection{Sparsest Poset}
\label{subsec:sparsest-poset}

In this section, we show that the Markov equivalence class of a DMAG $G^*=(V,D^*,B^*)$  can be determined from $\cI(\bbP)$ under the restricted faithfulness assumption by  casting this problem into an minimization problem over the space of partial orders of the set $V$.
We do this by mapping the space of these partial orders to minimal IMAPs of $G^*$ and minimizing a cost that is a function of such an IMAP.

A \emph{partial order} on a set $V$ is a relation $\le$ on $V$
that is reflexive, transitive, and antisymmetric.
Two elements $i,j \in V$ are said to be \emph{incomparable}
if neither $i \le j$ nor $j \le i$ holds.
We denote this symbolically by $i \incomp j$.
A set $V$ equipped with a specified partial order $\le$ is called a \emph{partially ordered set} (\emph{poset}), denoted $(V,\le)$.
Then, $V$ is called the \emph{ground set} of the poset.
The \emph{empty poset} is the poset $(V,\le)$ such that all $i,j \in V$ are incomparable.
We denote the set of all posets with a ground set $V$ by $\mathcal{P}(V)$.
Given a poset $\pi = (V,\le)$ and $s_1,\dots,s_k \in V$, define
\[
    \pre_\pi(s_1,\dots,s_k) := \{x \in V: x \le s_i \textnormal{ for some } 1 \le i \le k\}.
\]

Associated to each directed ancestral graph $G=(V,D,B)$
is a partial order $\le_G$ on $V$, defined by 
$$i \le_G j \Leftrightarrow i\in\an_{G}(j).$$
Note that the ancestral property implies that if $i\leftrightarrow_G j$,
then $i \incomp_G j$.
We denote the poset $(V,\le_G)$ by $\po(G)$. The map $G\mapsto (V,\le_G)$ gives a bijection from the set of \emph{complete} DMAGs, i.e., DMAGs whose skeleta are complete graphs,
to $\cP(V)$, the set of posets with ground set $V$.
Since not all DMAGs are complete, the
set of DMAGs on $V$ is strictly larger than $\cP(V)$.

This relationship between ancestral graphs and posets motivates describing the sparsest IMAP of a distribution $\bbP$ that is restricted-faithful to a DMAG $G^*$ in terms of posets by mapping every poset to an IMAP. This will lead to the concept of \emph{sparsest posets}; the posets of $\cP(V)$ that are mapped to DMAGs in $\cM(G^*)$. To obtain the map, we need the following definition.

\begin{defn}\label{defn:firstConstruction}
Given a joint distribution $\bbP$ on the random vector $X_V$ and
a poset $\pi = (V,\le_\pi)$.
Define $AG(\pi,\bbP)$ as the ancestral graph with directed edge set
$$\{i\rightarrow j: i\le_\pi j, X_i\not\ci_\bbP X_j \mid X_{\pre_\pi({i,j})\setminus \{i,j\}} \}$$
and bidirected edge set
$$\{i\leftrightarrow j: i\incomp_\pi j, X_i\not\ci_\bbP X_j \mid X_{\pre_\pi({i,j})\setminus \{i,j\}}\}.$$
\end{defn}

When $\le_\pi$ is a \emph{total order}, i.e. a partial order where the relations $i \le_\pi j$ or $j \le_\pi i$ hold for all $i,j$,
then $AG(\pi,\bbP)$ defines a map from permutations to DAGs and is the one used in the GSP algorithm~\citep{raskutti2018learning}. The authors showed in this case that $AG(\pi,\bbP)$ is a minimal IMAP for $\bbP$
for all total orders $\le_\pi$.
Unfortunately, as shown in the following example, $AG(\pi,\bbP)$ may not be an IMAP of $\bbP$ when $\le_\pi$ is allowed to be an arbitrary partial order.

\begin{example}\label{ex:notAnIMAP}
    Let $\bbP$ be a joint distribution that is restricted-faithful to the DMAG $G^*$ shown in Figure~\ref{fig:trueGraph}.
    Let $\pi$ be the poset with ground set $\{1,2,3,4\}$ and relations $2 \le 3$, $1 \le 4$, and $i \incomp j$ otherwise.
    Then $AG(\pi,\bbP)$, shown in Figure \ref{fig:notAnIMAP}, is not an IMAP of $\bbP$.
    To see this, note that $4\ci_{AG(\pi,\bbP)}3 \mid \{2\}$,
    but $X_4 \not\ci_\bbP X_3 \mid \{X_2\}$ since
    $4\leftrightarrow 2 \leftarrow 1 \rightarrow 3$ is a $\{2\}$-connecting path in $G^*$.
    
    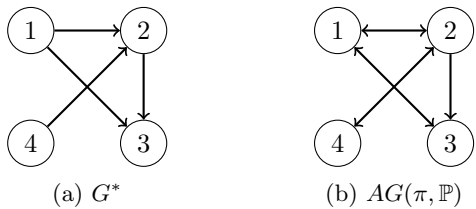
\begin{figure}[!t]
    \centering
	\begin{subfigure}[b]{0.48\linewidth}
	\centering
	\begin{tikzpicture}
		\node[shape=circle,draw=black] (1) at (-1.5, 0) {1};
		\node[shape=circle,draw=black] (4) at (-1.5, -1.5) {4};
		\node[shape=circle,draw=black] (2) at (0, 0) {2};
		\node[shape=circle,draw=black] (3) at (0, -1.5) {3};
		\path[->,style=thick] (1) edge (2);
		\path[->,style=thick] (1) edge (3);
		\path[->,style=thick] (2) edge (3);
		\path[->,style=thick] (4) edge (2);
	\end{tikzpicture}
	\caption{$G^*$}
	\label{fig:trueGraph}
	\end{subfigure}
	\begin{subfigure}[b]{0.48\linewidth}
	\centering
	\begin{tikzpicture}
		\node[shape=circle,draw=black] (1) at (-1.5, 0) {1};
		\node[shape=circle,draw=black] (4) at (-1.5, -1.5) {4};
		\node[shape=circle,draw=black] (2) at (0, 0) {2};
		\node[shape=circle,draw=black] (3) at (0, -1.5) {3};
			
		\path[<->,style=thick] (1) edge (2);
		\path[<->,style=thick] (1) edge (3);
		\path[->,style=thick] (2) edge (3);
		\path[<->,style=thick] (4) edge (2);
	\end{tikzpicture}
	\caption{$AG(\pi,\bbP)$}
	\label{fig:notAnIMAP}
	\end{subfigure}
	\caption{Graphs for Example~\ref{ex:notAnIMAP}. $\bbP$ is faithful to $G^*$ but $AG(\pi, \bbP)$ is not an IMAP of $\bbP$.}
	\end{figure}
\end{example}

However, we show in the following proposition, which is proven in Appendix~\ref{proof_prop1}, that one can construct a minimal IMAP of $\bbP$ for any poset $\pi$ using the map $AG(\cdot,\cdot)$ by defining
\[
    G_\pi^\bbP := \overline{AG(\po(AG(\pi,\bbP)),\bbP)}.
\]
where $\bbP$ and $\pi$ are as in Definition \ref{defn:firstConstruction}.
Recall that $\overline{G}$ denotes the maximal closure of $G$.
We want $G_\pi$ to be maximal since the results of \cite{zhang2012transformational}
regarding legitimate mark changes apply only to maximal ancestral graphs.
To simplify notation, we use $G_\pi$
instead of $G_\pi^\bbP$ when $\bbP$ is clear from context.

\begin{prop}
\label{prop:construction}
Let $\bbP$ be a joint distribution on $V$ that is restricted-faithful to a DMAG.
Then $G_\pi$ is a minimal IMAP of $\bbP$ for any poset $\pi\in\cP(V)$.
\end{prop}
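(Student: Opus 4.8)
\emph{Proof plan.} Write $H := AG(\pi,\bbP)$ and $\pi' := \po(H)$, so that $G_\pi = \overline{AG(\pi',\bbP)}$. Because taking the maximal closure preserves all $m$-separation statements, $\cI(\overline{AG(\pi',\bbP)}) = \cI(AG(\pi',\bbP))$, and $\overline{AG(\pi',\bbP)}$ is maximal by construction. Thus it suffices to prove that (i) $AG(\pi',\bbP)$ is an IMAP of $\bbP$, from which $G_\pi$ is a maximal IMAP, and (ii) no edge of $G_\pi$ can be deleted while keeping it a maximal IMAP. Throughout I would use two elementary facts, valid for every poset $\sigma$: the directed edges of $AG(\sigma,\bbP)$ point strictly upward in $\le_\sigma$, so the ancestor relation of $AG(\sigma,\bbP)$ is contained in $\le_\sigma$; and for $i\le_\sigma j$ one has $\pre_\sigma(\{i,j\}) = \pre_\sigma(j)$, a down-set of $\sigma$ and hence an ancestral set of $AG(\sigma,\bbP)$.

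The heart of the argument is to explain why replacing $\pi$ by $\pi' = \po(H)$ repairs the pathology of Example~\ref{ex:notAnIMAP}. By definition the comparabilities of $\pi'$ are exactly the \emph{directed}-ancestor relations of $H$, so $\pre_{\pi'}(\{i,j\})\setminus\{i,j\}$ equals the set of genuine $H$-ancestors of $i$ or $j$ (minus $i,j$). In Example~\ref{ex:notAnIMAP} the defect was precisely that $\pre_\pi(\{3,4\})$ contained the vertex $1$, which is \emph{not} an ancestor of $3$ or $4$ in $AG(\pi,\bbP)$; conditioning on this spurious collider opened the path $3\leftrightarrow 1\leftrightarrow 2\leftrightarrow 4$, so the tested set $\{1,2\}$ failed to $m$-separate the non-adjacent pair $(3,4)$, while the graph separated them by the smaller set $\{2\}$, a statement never checked by the construction. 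Passing to $\pi'$ purges such non-ancestors from every down-set. The key property I would therefore establish is the \emph{alignment property}: for every pair $i,j$, conditioning on $\pre_{\pi'}(\{i,j\})\setminus\{i,j\}$ blocks every path between $i$ and $j$ in $AG(\pi',\bbP)$ other than a direct edge. I would prove this by identifying $\pre_{\pi'}(\{i,j\})\setminus\{i,j\}$ with the ancestor set just described and invoking the ancestral-set (moralization) characterization of $m$-separation, checking that no conditioned ancestor is a collider on an indirect $i$--$j$ path; ruling this out is exactly what the removal of non-ancestors guarantees.

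Granting the alignment property, (i) is immediate. For non-adjacent $i,j$ the property yields $i\ci_{AG(\pi',\bbP)} j \mid \pre_{\pi'}(\{i,j\})\setminus\{i,j\}$, and by Definition~\ref{defn:firstConstruction} non-adjacency means exactly $X_i\ci_\bbP X_j \mid X_{\pre_{\pi'}(\{i,j\})\setminus\{i,j\}}$; so every such canonical separation of the graph is a genuine conditional independence of $\bbP$. Since all the conditioning sets involved are ancestral down-sets of $\pi'$, these canonical pairwise statements generate, through the ordered/pairwise Markov characterization of maximal ancestral graphs and the semi-graphoid axioms, every $m$-separation of $AG(\pi',\bbP)$, giving $\cI(AG(\pi',\bbP))\subseteq\cI(\bbP)$. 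Here the Markov hypothesis on $\bbP$ together with the orientation- and discriminating-faithfulness clauses of restricted faithfulness is what forces the pairwise tests to agree with the ancestral structure, so that no phantom separation of the Example~\ref{ex:notAnIMAP} type can re-appear.

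For (ii), I would argue edge-by-edge. Edges added by the maximal closure cannot be removed without destroying maximality, since the closure inserts an edge only between a non-adjacent pair that has no $m$-separator. For an edge $(i,j)$ of $AG(\pi',\bbP)$, the construction recorded $X_i\not\ci_\bbP X_j \mid X_{\pre_{\pi'}(\{i,j\})\setminus\{i,j\}}$; by the alignment property this set blocks all other paths, so deleting $(i,j)$ makes $i$ and $j$ $m$-separated by exactly that set, and an IMAP would then force the contradictory independence $X_i\ci_\bbP X_j \mid X_{\pre_{\pi'}(\{i,j\})\setminus\{i,j\}}$. Hence no edge is removable while preserving maximality and the IMAP property, and $G_\pi$ is a minimal IMAP. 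The main obstacle is plainly the prerequisite of step (ii) and step (i): establishing the alignment property and the passage from canonical pairwise independences to the full global Markov property. Pairwise independences do not in general entail all higher-order $m$-separations, and it is exactly the re-alignment of conditioning sets effected by replacing $\pi$ with $\po(AG(\pi,\bbP))$---together with restricted faithfulness---that licenses this implication.
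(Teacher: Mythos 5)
Your overall decomposition (reduce to pairwise statements relative to the conditioning sets used in the construction, then argue minimality edge by edge) matches the paper's strategy, but the central ingredient you rely on---the ``alignment property,'' that $\pre_{\pi'}(\{i,j\})\setminus\{i,j\}$ blocks every indirect path between $i$ and $j$ in $AG(\pi',\bbP)$ for \emph{every} pair $i,j$---is false, and the paper's own Example~\ref{ex:notNecessarilyMaximal} is a counterexample. There, with $\pi'=\po(AG(\pi,\bbP))$, the graph $AG(\pi',\bbP)$ is not maximal: the non-adjacent pair $(2,4)$ has \emph{no} $m$-separating set at all, so in particular it is not separated by its $\pre^*_{\pi'}$ set. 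The correct graphical fact (the pairwise Markov property invoked through Lemma~\ref{lemma:pairwiseMarkov}) applies only to \emph{non-adjacent} pairs of a \emph{maximal} ancestral graph; it says nothing about adjacent pairs and fails for non-maximal graphs, which is precisely why the closure appears in the definition of $G_\pi$. Your step (ii) also leans on the adjacent-pair version of the claim (``deleting $(i,j)$ makes $i$ and $j$ $m$-separated by exactly that set''), which does not hold in general; the paper instead restricts attention to deletions that leave the graph maximal and then re-applies the pairwise reduction to the deleted graph.

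The deeper gap is that you never establish the fixed-point property $\po(AG(\pi',\bbP))=\pi'$, i.e., that the ancestor order of the \emph{new} graph $AG(\pi',\bbP)$ coincides with $\pi'$ again. You correctly observe that $\pi'$ records the ancestor relations of $H=AG(\pi,\bbP)$, but the graph whose separations must be certified is $AG(\pi',\bbP)$, built from a different batch of CI tests; a priori its ancestor order could differ from $\pi'$, in which case the conditioning sets $\pre^*_{\pi'}(i,j)$ of Definition~\ref{defn:firstConstruction} would not equal the ancestor sets $\an^*_{G_\pi}(i,j)$ demanded by the pairwise Markov reduction, and both your steps (i) and (ii) collapse. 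This identification is exactly Lemma~\ref{lemma:fixedPoint} in the paper, and its proof is the technical heart of the whole argument: one assumes a directed edge of $AG(\pi,\bbP)$ disappears after the re-ordering, extracts an $m$-connecting path in $G^*$, locates on it a collider whose descendants lie only in the ``spurious'' part $S$ of the conditioning set, and derives a contradiction with the minimality of a chosen descendant in $S$. Nothing in your proposal substitutes for this argument; the intuition that ``passing to $\pi'$ purges non-ancestors'' explains why $\pi'$ is coarser than $\pi$ but not why it is stable under another application of $AG$. (A smaller issue: the semi-graphoid axioms alone do not upgrade pairwise statements to the global Markov property for ancestral graphs; one needs either compositional-graphoid hypotheses or, as the paper implicitly does, to route the pairwise separations through the true graph $G^*$.)
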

As we show in the following example, including the maximal closure in the definition of $G_\pi$ is required since it may otherwise not be maximal.

\begin{example}\label{ex:notNecessarilyMaximal}
    Let $\bbP$ be a joint distribution faithful to the graph $G^*$ displayed in Figure \ref{fig:trueGraphNotMaximalExample}.
    Let $\pi$ be the poset with ground set $V=\{1,2,3,4,5\}$ and ordering relations $1 \le 2$, 
    $3 \le 4$, $5 \le 4$, and $i \incomp j$ otherwise.
    Then $AG(\po(AG(\pi,\bbP)),\bbP)$, displayed in Figure \ref{fig:badIMAPNotMaximalExample}, is not maximal.
    To see this, note that $AG(\po(AG(\pi,\bbP)),\bbP)$ lacks an edge between $2$ and $4$,
    while there is no set $S\subseteq V\setminus\{2,4\}$ that $m$-separates $2$ and $4$ in $AG(\pi(AG(\pi,\bbP),\bbP).$
    
    \begin{figure}
	\begin{subfigure}[b]{0.48\linewidth}
    \centering
	\begin{tikzpicture}
	    \foreach \i in {1, 2, 3, 4, 5} {
		    \setcounter{Angle}{\i * 360 / 5}
		    \node[shape=circle,draw=black] (\i) at (\theAngle:1) {\i};
	    }
		\path[->,style=thick] (4) edge (5);
		\path[->,style=thick] (1) edge (2);
		\path[->,style=thick] (3) edge (2);
		\path[->,style=thick] (3) edge (1);
		\path[->,style=thick] (3) edge (4);
		\path[<->,style=thick] (4) edge (1);
	\end{tikzpicture}
	\caption{$G^*$}
	\label{fig:trueGraphNotMaximalExample}
	\end{subfigure}
	\begin{subfigure}[b]{0.48\linewidth}
    \centering
	\begin{tikzpicture}
	    \foreach \i in {1, 2, 3, 4, 5} {
		    \setcounter{Angle}{\i * 360 / 5}
		    \node[shape=circle,draw=black] (\i) at (\theAngle:1) {\i};
	    }
		\path[->,style=thick] (5) edge (4);
		\path[->,style=thick] (1) edge (2);
		\path[->,style=thick] (3) edge (4);
		\path[<->,style=thick] (1) edge (5);
		\path[<->,style=thick] (2) edge (5);
		\path[<->,style=thick] (3) edge (5);
		\path[<->,style=thick] (1) edge (3);
		\path[<->,style=thick] (2) edge (3);
		\path[<->,style=thick] (1) edge (4);
	\end{tikzpicture}
	\caption{$AG(\po(AG(\pi,\bbP),\bbP)$}
	\label{fig:badIMAPNotMaximalExample}
	\end{subfigure}
	\caption{Graphs for Example~\ref{ex:notNecessarilyMaximal}. $\bbP$ is faithful to $G^*$ but $AG(\po(AG(\pi,\bbP),\bbP)$ is not maximal.}
	\end{figure}
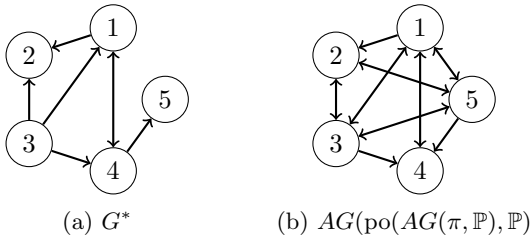
\end{example}


Having defined a map from posets to minimal IMAPs for DMAGs, we are almost ready to state our result on the consistency of the sparsest poset. The following theorem establishes that under restricted-faithfulness \emph{all} sparsest IMAPs of $G^*$ are Markov equivalent to $G^*$.


\begin{thm}
\label{thm:sparsest_imap}
Given a distribution $\bbP$ and a DMAG $G^*$ that is an IMAP of $\bbP$,
let 
\begin{equation}
    \label{eq:sparsest_imap}
    G \in \arg\min_{\{H :\;  H \textrm{ is an IMAP of }\bbP\}} |H|.
\end{equation}
\begin{enumerate}
    \item[(a)] If $\bbP$ is adjacency-faithful to $G^*$, then $\skel(G) = \skel(G^*)$.
    \item[(b)] If $\bbP$ is restricted-faithful to $G^*$, then $G \in \cM(G^*)$.
    
\end{enumerate}
\end{thm}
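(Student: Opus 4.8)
The plan is to prove (a) directly from adjacency-faithfulness together with the sparsity of $G$, and then to prove (b) by invoking the combinatorial characterization of Markov equivalence of \cite{spirtes1996polynomial}: since both $G$ and $G^*$ are DMAGs, it suffices to show they have (i) the same skeleton, (ii) the same v-structures, and (iii) the same colliders on every path discriminating for a vertex in both graphs. Part (a) will deliver (i), orientation-faithfulness will deliver (ii), and discriminating-faithfulness will deliver (iii).

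For (a), I would first record the basic fact that every IMAP $H$ of $\bbP$, being maximal, must contain every adjacency of $G^*$. Indeed, if $i,j$ are adjacent in $G^*$ but non-adjacent in $H$, then maximality of $H$ yields some $S$ with $i \ci_H j \mid S$, hence $X_i \ci_\bbP X_j \mid X_S$ because $H$ is an IMAP, contradicting adjacency-faithfulness. Thus $\skel(G^*) \subseteq \skel(H)$, so $|G^*| \le |H|$ for every IMAP $H$. Applying this to $H = G$ gives $|G^*| \le |G|$, while $G$ being a sparsest IMAP gives $|G| \le |G^*|$ (as $G^*$ is itself an IMAP). Hence $|G| = |G^*|$, and combined with $\skel(G^*) \subseteq \skel(G)$ this forces $\skel(G) = \skel(G^*)$.

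For the v-structures in (b), the key translation is that for an unshielded triple $i - k - j$ in a DMAG, the collider status of $k$ is read off from separating sets: $k$ is a collider iff some set $S$ with $k \notin \an(S)$ $m$-separates $i,j$ (equivalently, no separating set contains $k$), while $k$ is a non-collider iff every separating set contains $k$. I would use this on both $G$ and $G^*$ and bridge them through $\bbP$. If $k$ is a collider in $G$, take a $G$-separating set $S_0$ with $k \notin S_0$; then $X_i \ci_\bbP X_j \mid X_{S_0}$ by the IMAP property, whereas if $k$ were a non-collider in $G^*$ the path $i - k - j$ would be $m$-connecting in $G^*$ given $S_0$ (since $k \notin S_0$), so orientation-faithfulness would give $X_i \not\ci_\bbP X_j \mid X_{S_0}$, a contradiction. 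The reverse implication is symmetric, using a $G$-separating set containing $k$ when $k$ is a non-collider in $G$. This shows $G$ and $G^*$ have the same v-structures.

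For the discriminating-path condition (iii), I would run the same argument with discriminating-faithfulness in place of orientation-faithfulness. The required ingredient is the analogue of the translation lemma: for a path $\gamma$ discriminating for $k$ with internal collider nodes $L$ (all parents of $j$ in both graphs), the collider status of $k$ on $\gamma$ is detected by whether $i,j$ are $m$-separated by a set extending $L$ that excludes $k$ versus one that includes $k$. Given such a separating set in $G$, the IMAP property produces a CI in $\bbP$, while discriminating-faithfulness forces the corresponding $m$-separation in $G^*$, pinning down the status of $k$. I expect the main obstacle to be precisely this step: constructing conditioning sets that isolate the contribution of $\gamma$, blocking all other $i$--$j$ paths while correctly toggling the collider at $k$ through membership of $k$ (with care for the ancestral activation condition $k \in \an(S)$ rather than merely $k \in S$), and verifying these sets are simultaneously valid for the IMAP property on $G$ and discriminating-faithfulness on $G^*$. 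Once (i)--(iii) are established, \cite{spirtes1996polynomial} yields $G \in \cM(G^*)$.
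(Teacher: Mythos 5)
Your proposal follows essentially the same route as the paper: adjacency-faithfulness plus the pairwise Markov property and minimality of $|G|$ give $\skel(G)=\skel(G^*)$, and the Spirtes--Richardson equivalence criterion is then verified by applying orientation- and discriminating-faithfulness to separating sets of the form $\an^*_H(\{i,j\})$ supplied by the IMAP property. The discriminating-path step you flag as the main obstacle is resolved in the paper by exactly the standard fact you identify (any set $m$-separating the endpoints of a discriminating path must contain the internal colliders $C_1,\dots,C_l$, and is $m$-connecting or $m$-separating according to whether its treatment of $k$ matches $k$'s collider status), so no additional machinery is needed.
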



The proof of this theorem is given in Appendix~\ref{proof_thm1}; it involves using the adjacency faithfulness condition to obtain $\skel(G)\supseteq \skel(G^*)$ for any IMAP $G$. Then we show that the IMAP condition on $G$, under restricted-faithfulness of $\bbP$, forces a graph with the same skeleton as $G^*$ to have matching unshielded colliders and matching discriminating paths when these discriminating paths are present in both of these graphs.


The following proposition establishes that $G^*$ is in the image of $\pi \mapsto G_\pi$; its proof is given in Appendix~\ref{proof_prop2}. Thus, when restricting our search over IMAPs to the the image of this map, the optimum is still in our feasible set.

\begin{prop}
\label{prop:gpi-eq-gstar}
    Let $\bbP$ be restricted-faithful to DMAG $G^*$. If $G\in\mathcal M(G^*)$,
    and $\pi = \po(G)$,
    then $G_\pi = G$.
\end{prop}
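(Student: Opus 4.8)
The plan is to reduce the whole claim to the single identity $AG(\po(G),\bbP)=G$ and then verify that identity edge by edge. Suppose we have shown $AG(\po(G),\bbP)=G$. Since $\pi=\po(G)$, this says $AG(\pi,\bbP)=G$, hence $\po(AG(\pi,\bbP))=\po(G)=\pi$, and therefore $AG(\po(AG(\pi,\bbP)),\bbP)=AG(\pi,\bbP)=G$. Because $G$ is a DMAG it is already maximal, so $\overline{G}=G$ and thus $G_\pi=\overline{AG(\po(AG(\pi,\bbP)),\bbP)}=G$. So everything comes down to proving $AG(\po(G),\bbP)=G$, i.e.\ $AG(\cdot,\bbP)\circ\po$ fixes $G$.

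To set this up, I would first note that $\bbP$ is Markov and adjacency-faithful to $G$ itself: since $G\in\cM(G^*)$ we have $\cI(G)=\cI(G^*)\subseteq\cI(\bbP)$, giving the Markov property, and adjacency-faithfulness transfers from $G^*$ to $G$ because the two graphs share a skeleton and that condition refers only to adjacency. Next I compute the conditioning set used by $AG(\po(G),\bbP)$ for a pair $\{i,j\}$. By definition of $\po(G)$, $x\le_{\po(G)} i$ iff $x\in\an_G(i)$, so $\pre_{\po(G)}(\{i,j\})=\an_G(i)\cup\an_G(j)=\an_G(\{i,j\})$ regardless of whether $i$ and $j$ are comparable. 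Hence the test for $\{i,j\}$ always conditions on $S_{ij}:=\an_G(\{i,j\})\setminus\{i,j\}$.

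Now fix a pair $i,j$. If $i,j$ are adjacent in $G$, adjacency-faithfulness gives $X_i\not\ci_\bbP X_j\mid X_{S_{ij}}$, so $AG(\po(G),\bbP)$ places an edge between them; its orientation matches $G$ because the ancestral property forces $i\to j$ in $G$ to satisfy $i\in\an_G(j)$, $j\notin\an_G(i)$ (so $i\le_{\po(G)}j$, producing $i\to j$), while $i\leftrightarrow j$ in $G$ forces $i\incomp_{\po(G)}j$ (producing $i\leftrightarrow j$). If $i,j$ are non-adjacent in $G$, I claim $X_i\ci_\bbP X_j\mid X_{S_{ij}}$, which removes any edge. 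By the Markov property this follows once we establish the purely graphical statement
\[
    i\ci_G j \mid \an_G(\{i,j\})\setminus\{i,j\}.
\]

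This graphical statement is the main obstacle, and I would prove it with the standard machinery for ancestral graphs \citep{richardson2002ancestral}. Because $\an_G(\{i,j\})$ is an ancestral set, $m$-separation of $i$ and $j$ given $S_{ij}$ is equivalent to undirected separation of $i$ and $j$ by $S_{ij}$ in the augmented graph of the induced subgraph $G_{\an_G(\{i,j\})}$; since $S_{ij}$ contains every vertex of this subgraph except $i$ and $j$, the separation holds as soon as $i$ and $j$ are non-adjacent in that augmented graph. Two vertices become adjacent under augmentation exactly when they are joined by a path whose intermediate vertices are all colliders lying in the vertex set $\an_G(\{i,j\})$, i.e.\ colliders that are ancestors of $i$ or $j$; such a path is precisely an inducing path between $i$ and $j$. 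Since $G$ is maximal, non-adjacent vertices admit no inducing path, so $i$ and $j$ remain non-adjacent after augmentation and the separation follows. Combining the two cases shows that $AG(\po(G),\bbP)$ has exactly the edges of $G$ with the same orientations, which is the identity $AG(\po(G),\bbP)=G$ and hence completes the proof. I would also remark that only the Markov property and adjacency-faithfulness entered the argument, so this proposition in fact requires strictly less than full restricted-faithfulness.
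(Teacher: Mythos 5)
Your proof is correct and follows essentially the same route as the paper's: reduce everything to the fixed-point identity $AG(\po(G),\bbP)=G$, obtain the edges of $G$ from adjacency-faithfulness (which transfers to $G$ since Markov-equivalent DMAGs share a skeleton) together with the ancestral property for orientations, and rule out extra edges via the pairwise Markov property $i\ci_G j\mid \an^*_G(i,j)$ for non-adjacent pairs, which is exactly the Theorem~4.2 argument of \cite{richardson2002ancestral} that the paper invokes. Your write-up is in fact more complete than the paper's terse proof, which only explicitly treats the adjacent case and leaves the non-adjacent case, the orientation check, and the reduction through $\overline{AG(\po(AG(\pi,\bbP)),\bbP)}$ implicit.
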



We are now ready to state our main result. 

\begin{thm}[Sparsest Poset]\label{thm:sparsestPoset}
Let $\bbP$ be a distribution on $V$ that is restricted faithful to  a DMAG $G^*$. 
If
$$\tau \in \arg\min_{\pi\in\mathcal{P}(V)}|G_\pi|,$$
then $G_\tau$ is Markov equivalent to $G^*$.
\end{thm}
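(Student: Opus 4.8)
The plan is to prove the theorem by a sandwiching argument that pins $|G_\tau|$ between the size of a sparsest IMAP of $\bbP$ and the size of $G^*$, and then to apply Theorem~\ref{thm:sparsest_imap}(b) directly to $G_\tau$. Write $m^\star := \min\{|H| : H \textrm{ is an IMAP of } \bbP\}$ for the size of a sparsest IMAP. Since restricted-faithfulness of $\bbP$ to $G^*$ includes the Markov property, $G^*$ is itself an IMAP of $\bbP$, and hence $m^\star \le |G^*|$.

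The first step is to observe that in fact $m^\star = |G^*|$. Indeed, any $G$ achieving the minimum in~\eqref{eq:sparsest_imap} satisfies $G \in \cM(G^*)$ by Theorem~\ref{thm:sparsest_imap}(b). Markov equivalent DMAGs share the same skeleton by the characterization of \citep{spirtes1996polynomial} recalled in Section~\ref{section:background}, and the edge count of a DMAG equals that of its skeleton; therefore $|G| = |\skel(G)| = |\skel(G^*)| = |G^*|$, giving $m^\star = |G^*|$.

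The second step is to sandwich $|G_\tau|$ using the two propositions. On one side, Proposition~\ref{prop:construction} guarantees that every $G_\pi$ is an IMAP of $\bbP$, so that $|G_\tau| = \min_{\pi \in \cP(V)} |G_\pi| \ge m^\star = |G^*|$. On the other side, $G^* \in \cM(G^*)$ trivially, so Proposition~\ref{prop:gpi-eq-gstar} applied with $G = G^*$ and $\pi = \po(G^*)$ yields $G_{\po(G^*)} = G^*$; in particular $G^*$ lies in the image of the map $\pi \mapsto G_\pi$, so that $|G_\tau| \le |G_{\po(G^*)}| = |G^*|$. Combining the two bounds forces $|G_\tau| = |G^*| = m^\star$.

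The final step is then immediate: $G_\tau$ is an IMAP of $\bbP$ (Proposition~\ref{prop:construction}) whose size equals $m^\star$, so $G_\tau$ is itself a sparsest IMAP, i.e.\ a minimizer in~\eqref{eq:sparsest_imap}, and Theorem~\ref{thm:sparsest_imap}(b) then gives $G_\tau \in \cM(G^*)$, which is the claim. I do not anticipate a serious obstacle, since the three cited results do the heavy lifting; the only points requiring care are the identity $m^\star = |G^*|$, which rests on Markov equivalence preserving the number of edges, and the verification that the feasible set of the poset minimization — the image of $\pi \mapsto G_\pi$ — actually contains a sparsest IMAP, which is precisely what Proposition~\ref{prop:gpi-eq-gstar} supplies.
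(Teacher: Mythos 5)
Your proof is correct and follows essentially the same route as the paper's (very terse) proof: use Proposition~\ref{prop:gpi-eq-gstar} to place $G^*$ in the image of $\pi \mapsto G_\pi$, use Proposition~\ref{prop:construction} to see every $G_\pi$ is an IMAP, conclude $G_\tau$ is a sparsest IMAP, and invoke Theorem~\ref{thm:sparsest_imap}(b). Your version merely makes explicit the sandwiching step $m^\star = |G^*|$ (which the paper leaves implicit, and which also follows directly from Theorem~\ref{thm:sparsest_imap}(a) since equal skeleta force equal edge counts), so there is nothing to correct.
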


\begin{proof}
Propositions~\ref{prop:construction} and~\ref{prop:gpi-eq-gstar} together imply that there is an IMAP $H = G_\pi$ for some $\pi$ such that $|H| = |G^*|$.
Theorem~\ref{thm:sparsest_imap} then gives the desired result.
\end{proof}

\section{GREEDY SPARSEST POSET}
\label{section:greedy}

Theorem \ref{thm:sparsestPoset} formulates the problem of finding a graph $G^*$ from $\bbP$ as a discrete optimization problem over $\cP(V)$, the set of all posets on the ground set $V$. 
In this section, we discuss solving this optimization problem by imposing a graph structure on $\mathcal{P}(V)$
and then performing a greedy search along the edges of the graph.
Note that Theorem~\ref{thm:sparsestPoset} does not guarantee that a greedy approach returns an optimum. Supported by simulations, we will conjecture that this is indeed the case.

\subsection{Greedy Sparsest Poset}

\begin{figure*}[t!]
	\centering
	\begin{subfigure}[t]{.48\textwidth}
	    \includegraphics[width=\linewidth]{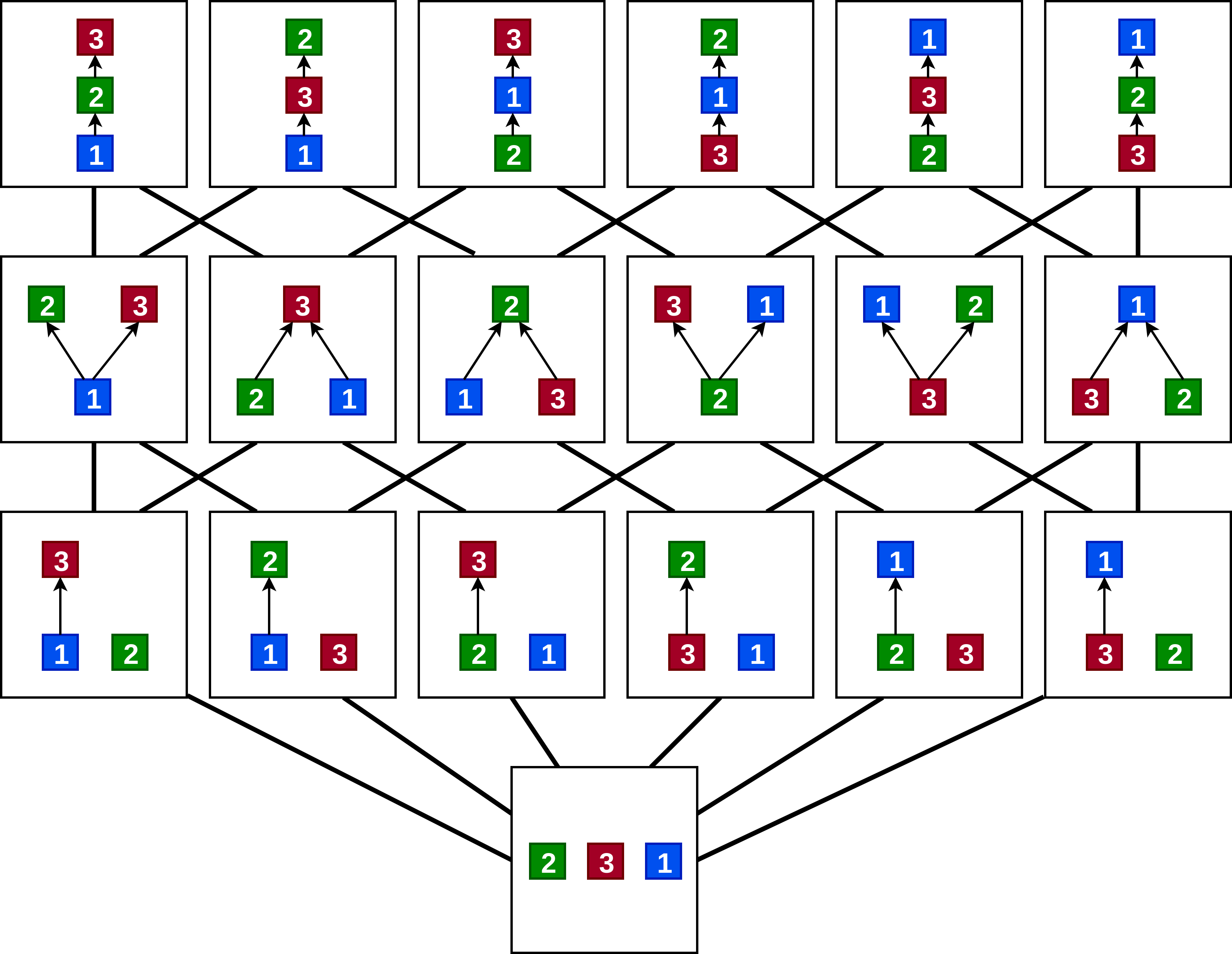}
        \caption{$\mathcal{H}_{\mathcal{P}(V)}$ for $V = \{1,2,3\}$.}
	    \label{fig:posetofposets}
	\end{subfigure}
	~
	\begin{subfigure}[t]{.48\textwidth}
	    \includegraphics[width=\linewidth]{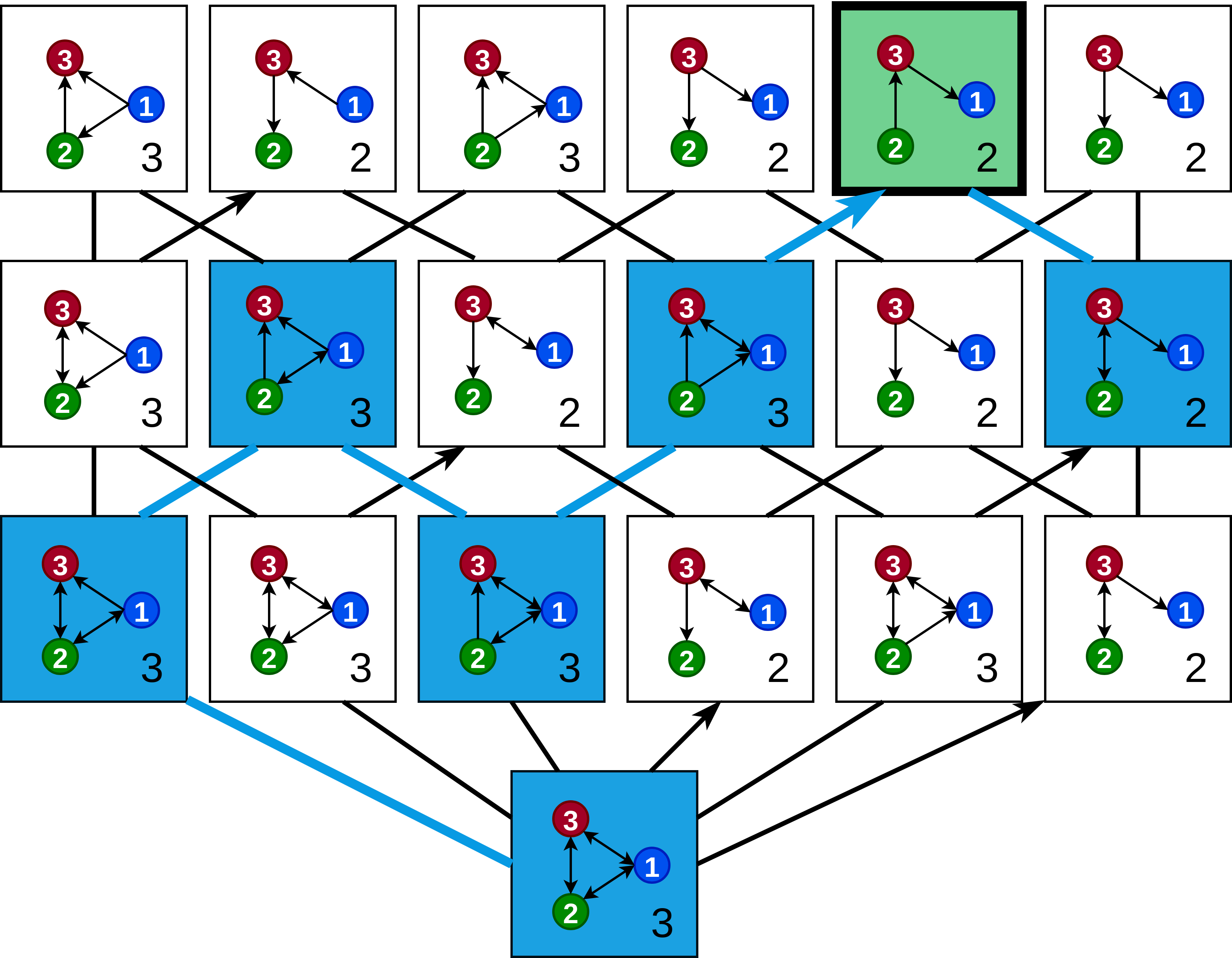}
	    \caption{A relabeling of $\mathcal{H}_{\mathcal{P}(V)}$ be replacing each $\pi$ with its corresponding $G_\pi$ when $\cI(\bbP) = \{X_1 \ci_\bbP X_2 \mid X_3\}$.}
	    \label{fig:posetofmags}
	\end{subfigure}
	\caption{
	(a) shows $\cH_{\cP(V)}$ for $V=\{1,2,3\}$.
	Each of the large squares represents a poset $\pi\in P(V)$. We represent each $\pi$ by having $i$ lie above $j$ only if $j\le_\pi i$. For example, the square in the upper left corner represents the poset with relations $1\le_\pi 2 \le_\pi 3$ while the bottom most square represents the empty poset.
    Posets $(V,\le_1)$ and $(V,\le_2)$ are connected by an edge whenever there exists a unique pair $i,j \in V$ such that $i \le_1 j$, but $i \incomp_2 j$.
    (b) shows a relabeling of $\cH_{\cP(V)}$ by replacing each $\pi$ with $G_\pi$ when $\cI(\bbP) = \{X_1 \ci_\bbP X_2 \mid X_3\}$. The number of edges of each $G_\pi$ is indicated in the bottom right corner of the square containing it. The direction of edges indicates a strict decrease in the number of edges from one graph to the next.
    A possible path that algorithm~\ref{alg:gspo1} could take starting at the bottom square is highlighted in blue,
    with the graph returned colored green.
	}\label{fig:posetOfPosetsAlgDemonstration}
\end{figure*}

Perhaps the most natural graph structure on $\mathcal{P}(V)$ is known as the \emph{Hasse diagram of the poset of posets} \citep{bouc2013poset},
which we denote by $\mathcal{H}_{\mathcal{P}(V)}$.
One obtains this by adding an edge to connect posets $(V,\le_1)$ and $(V,\le_2)$
whenever there exists a unique pair $i,j \in V$ such that $i \le_1 j$,
but $i \incomp_2 j$.
Figure~\ref{fig:posetofposets} gives an example of $\mathcal{H}_{\mathcal{P}(V)}$
when $V = \{1,2,3\}.$
For more details about Hasse diagrams, see \cite{stanley2011enumerative}.



Algorithm~\ref{alg:gspo1} is a greedy search along the edges of $\cH_{\cP(V)}$ to determine a poset $\pi$
yielding the sparsest $G_\pi$.
Figure~\ref{fig:posetofmags} shows an example run of Algorithm~\ref{alg:gspo1} when $\cI(\bbP)=\{X_1\ci_\bbP X_2 \mid X_3\}$, where 
each poset $\pi$ is replaced by its corresponding $G_\pi$, along with a possible path taken when starting at the empty poset.

\begin{algorithm}[b!]
	\caption{}
	\label{alg:gspo1}
	\begin{algorithmic}
		\State{\textbf{Input:} $\cI(\bbP)$, with $\bbP$ restricted-faithful to $G^*$;   a starting poset $\pi_0$.}
		\State{\textbf{Output:} A minimal IMAP of $G^*$.}
		\State Set $\pi = \pi_0$;
		\State Via depth-first search on $\cH_{\mathcal{P}(V)}$ with root $\pi$,
		find a path $\pi_1:=\pi,\dots,\pi_k:=\tau$ such that $\pi_i$ is adjacent to $\pi_{i+1}$ in $\mathcal{H}_\mathcal{P}$, $|G_{\pi_i}| \ge |G_{\pi_{i+1}}|$
		and $|G_{\pi}| > |G_{\tau}|$. \\
		If such $\pi_k$ exists, set $\pi$ to $\pi_k$, and repeat this step.\\
		Otherwise, return $G_\pi$.
	\end{algorithmic}
\end{algorithm}

As the example in Figure~\ref{fig:posetofmags} shows,
$G_\pi = G_\tau$ can happen for $\pi \neq \tau$.
To achieve better run-time performance, one might optimize directly over the set $\{G_\pi : \pi \in \mathcal{P}(V)\}$ rather than $\cP(V)$,
thus avoiding moving between posets that give rise to the same graph, similar as in GSP~\citep{solus2017consistency,mohammadi_2018}.
We propose to do this by moving from $G_\pi$ to $G_{\po{G'}}$ where $G'$ is obtained from $G_\pi$ via a legitimate mark change,
the definition of which we now restate.
\begin{defn}[\cite{zhang2012transformational}]
    \label{defn:lmc}
    Given a DMAG $G$, a \emph{legitimate mark change of $G$} is the process of turning
    an edge $i \rightarrow j$ to $i \leftrightarrow j$, or vice-versa, when
    \begin{enumerate}
    \vspace{-0.2cm}
        \item there is no directed path from $i$ to $j$ aside from possibly $i\rightarrow j$;
        \vspace{-0.1cm}
        \item if $k \rightarrow i$, then $k \rightarrow j$. If $k \leftrightarrow i$, then $k \leftrightarrow j$ or $k \rightarrow j$;
        \vspace{-0.1cm}
        \item there is no discriminating path $\langle k,\dots,i,j\rangle$.
        \vspace{-0.2cm}
    \end{enumerate}
\end{defn}
\cite{zhang2012transformational} showed that DMAGs $G$ and $H$ are Markov equivalent if and only if
$G$ can be transformed into $H$ via a sequence of legitimate mark changes.
This is analogous to the result by \cite{chickering1995transformational} that DAGs $G$ and $H$ are Markov equivalent
if and only if $G$ can be transformed into $H$ via a sequence of covered edge flips, which are exactly the moves used by GSP~\citep{solus2017consistency,mohammadi_2018}.
Using this notion of edge change gives a different search space,
defined in terms of the IMAPs $G_\pi$.
Namely, given a distribution $\bbP$,
define $\mathcal{L}_\bbP$ to be the directed graph with vertex set
$\{G_\pi: \pi \in \mathcal{P}(V)\}$
with an arc from $G_\pi$ to $G_\tau$ when there exists a graph $G'$,
obtainable from $G_\pi$ via a single legitimate mark change, such that $\tau = \po(G')$.

Figure~\ref{fig:space} shows the outgoing edges of a particular minimal IMAP $G_\pi$ in $\mathcal{L}_\bbP$ when $\bbP$
is faithful to $G^*$ of Figure~\ref{fig:trueGraph}.
As shown, there are two possible legitimate mark changes that can be performed on $G_\pi$ shown as dashed. Changing the bidirected dashed edge, for example, would result in $G_1'$ with $\tau_1 = \po(G_1')$. Hence, there is an outgoing edge from $G_\pi$ to $G_{\tau_1}$ in $\mathcal{L}_\bbP$.

Algorithm \ref{alg:gspo} is the resulting greedy search for the sparsest $G_\pi$ over $\mathcal{L}_\bbP$.
We call this algorithm the \emph{greedy sparsest poset algorithm} (\emph{GSPo}).
We conjecture, supported by simulations on the order of 100,000s of examples (see Appendix~\ref{appendix:sparsity_plot}), that GSPo is consistent under the restricted-faithfulness assumption (using a sufficiently large depth $d$ in the search), i.e., it yields a DMAG that is Markov equivalent to $G^*$ no matter the starting point.
This conjecture generalizes the consistency result proven for GSP in the fully observed setting~\citep{solus2017consistency}.
\begin{figure}[t!]
    \centering
    \includegraphics[width=\linewidth]{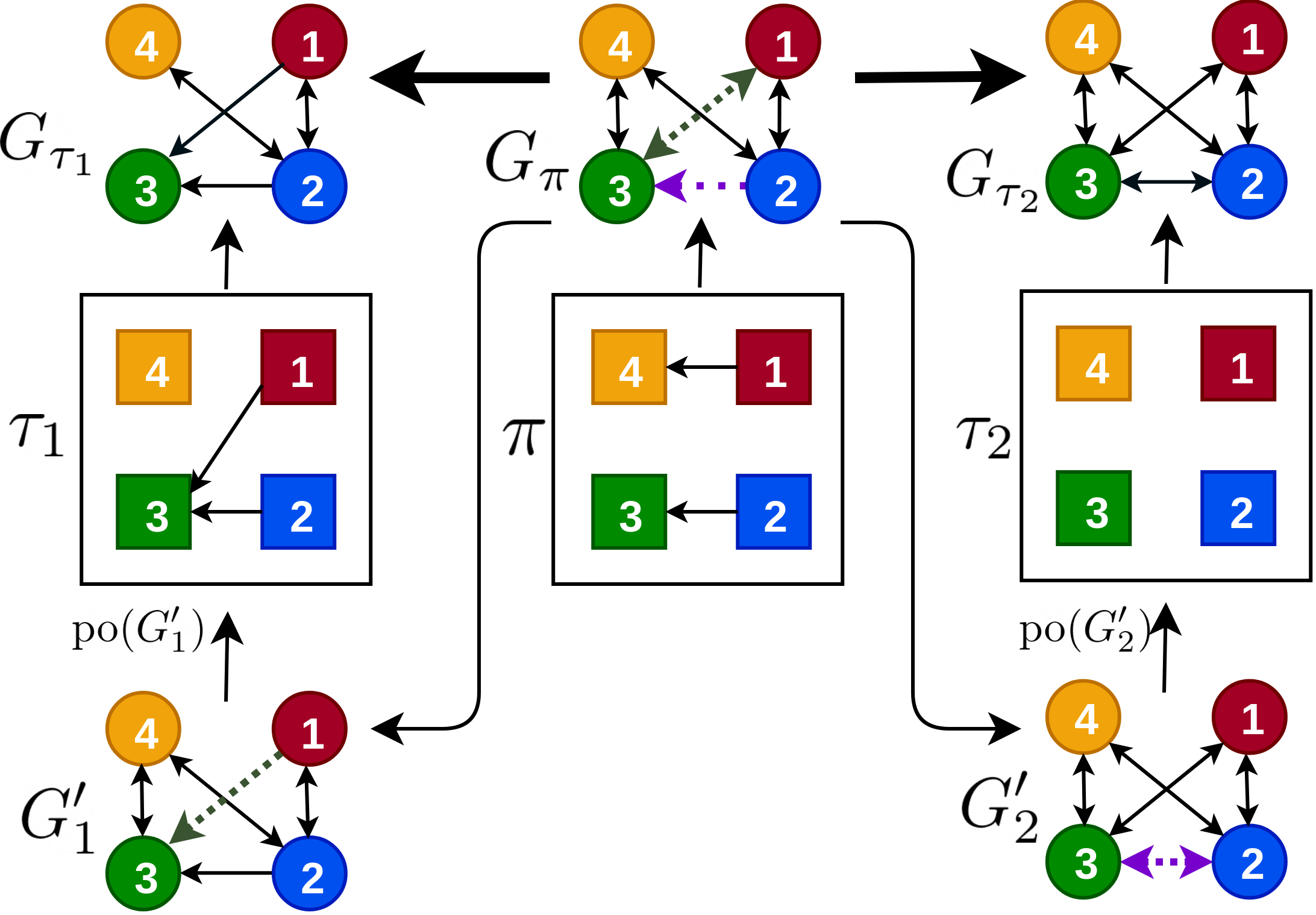}
    \caption{Example of the outgoing edges (in bold) of the node $G_\pi$ in $\mathcal{L}_\bbP$ where
    $\bbP$ is faithful to $G^*$ from Figure~\ref{fig:notAnIMAP}.
    The graphs $G_1'$ and $G_2'$ are obtained from $G_\pi$ via legitimate mark changes of the colored dashed edges.
    The posets $\tau_1,\tau_2$ are $\po(G_1'),\po(G_2')$ respectively
    so $\mathcal{L}_\bbP$ has edges from $G_\pi$ to $G_{\tau_1}$ and $G_{\tau_2}$.}
    \label{fig:space}
    \vspace*{-\baselineskip}
\end{figure}

\begin{conjecture}\label{conj:gspo}
    Let $\bbP$ be a probability distribution that is restricted-faithful to a DMAG $G^*$.
    If $\pi_0$ is any poset,
    then there exists a directed path $\pi_0\rightarrow \pi_1\rightarrow \dots\rightarrow \pi_k$ in $\mathcal{L}_\bbP$
    such that $G_{\pi_k}$ is sparsest,
    and such that $\pi_i$ always has weakly fewer edges than $\pi_{i-1}$.
\end{conjecture}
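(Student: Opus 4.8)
The plan is to mirror the consistency proof for GSP in the fully observed case \citep{solus2017consistency}, adapting it from covered edge reversals on DAGs to legitimate mark changes on DMAGs. First I would pin down the target set. By Theorem~\ref{thm:sparsestPoset} every sparsest $G_\pi$ lies in $\mathcal{M}(G^*)$; conversely, by Proposition~\ref{prop:gpi-eq-gstar} every $G \in \mathcal{M}(G^*)$ equals $G_{\po(G)}$, and since Markov equivalent DMAGs share a skeleton they all have exactly $|G^*|$ edges. Hence the sparsest nodes of $\mathcal{L}_\bbP$ are precisely $\{G_{\po(G)} : G \in \mathcal{M}(G^*)\}$, and ``reaching a sparsest $G_{\pi_k}$'' means reaching this set. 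The conjecture then reduces to a local descent claim, after which induction on the edge surplus $|G_{\pi_0}| - |G^*| \ge 0$ closes the argument: if $G_{\pi_0} \notin \mathcal{M}(G^*)$, then $G_{\pi_0}$ has an outgoing arc in $\mathcal{L}_\bbP$ to some $G_{\pi_1}$ with $|G_{\pi_1}| \le |G_{\pi_0}|$, and the descent eventually becomes strict so that no infinite lateral cycling occurs.

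Second, to produce the descending move I would exploit the skeleton structure. As in the proof of Theorem~\ref{thm:sparsest_imap}(a), adjacency-faithfulness gives $\skel(G_{\pi_0}) \supseteq \skel(G^*)$ for every IMAP; since Markov equivalent DMAGs have equal edge counts, $|G_{\pi_0}| > |G^*|$ forces a strict skeleton containment, so there is a pair $\{i,j\}$ adjacent in $G_{\pi_0}$ but not in $G^*$. Maximality of $G^*$ then yields a set $S$ with $i \ci_{G^*} j \mid S$, hence $X_i \ci_\bbP X_j \mid X_S$ because $G^*$ is an IMAP. The plan is to use this witnessed independence to drive an edge deletion: I would find a sequence of legitimate mark changes (Definition~\ref{defn:lmc}) reorienting $G_{\pi_0}$ so that the associated poset $\po(G')$ places a separating set for $\{i,j\}$ inside $\pre_{\po(G')}(i,j)\setminus\{i,j\}$; the re-projection $G' \mapsto G_{\po(G')}$ of Proposition~\ref{prop:construction} then drops the edge $\{i,j\}$, giving $|G_{\pi_1}| \le |G_{\pi_0}| - 1$. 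When no single mark change immediately removes an edge, the fallback is a lateral move that strictly decreases a secondary potential, for instance the number of comparabilities on which $\po(G_{\pi_0})$ and $\po(G^*)$ disagree (a distance along $\mathcal{H}_{\mathcal{P}(V)}$), so that strict edge decrease is reached after finitely many steps and cycling is impossible.

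The hard part is establishing the existence of this descending move, which amounts to a DMAG analogue of Chickering's transformational theorem \citep{chickering2002optimal} for nested rather than equal independence models. Zhang's characterization \citep{zhang2012transformational} supplies legitimate mark changes only between Markov equivalent DMAGs, i.e.\ when $\cI(G') = \cI(G)$, whereas here I need a monotone connection across a strict gap in independence structure with edge count weakly decreasing at each step. Two features make this genuinely harder than the DAG case. First, the legitimacy conditions for a mark change are non-local because of the discriminating-path requirement, so one cannot argue purely locally as with covered edges. Second, the re-projection $\po(G') \mapsto G_{\po(G')}$ is defined through conditional independence in $\bbP$ rather than by graph surgery, so it may alter many edges at once and its effect on $|G_{\pi_1}|$ must be controlled using the restricted-faithfulness conditions, which pin down exactly the unshielded and discriminating colliders, as in the proof of Theorem~\ref{thm:sparsest_imap}. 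Proving such a monotone transformational lemma for DMAGs, or exhibiting the secondary potential that forces strict progress, is precisely the gap that keeps this a conjecture, and it is where I would concentrate the effort.
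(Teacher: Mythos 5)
There is nothing in the paper to compare your attempt against: the statement is explicitly a \emph{conjecture}, the paper offers no proof of it, and the only supporting evidence given is the simulation study in Appendix~\ref{appendix:sparsity_plot} (roughly $200{,}000$ oracle runs with no counterexample found). The Discussion section states plainly that consistency of the greedy algorithm ``remains an open question.'' So the honest verdict is that your proposal is a proof \emph{strategy} with an acknowledged gap, not a proof --- and you say as much yourself, which is the correct assessment.

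That said, your reduction is sound as far as it goes and matches how the authors frame the problem. Using Theorem~\ref{thm:sparsestPoset} and Proposition~\ref{prop:gpi-eq-gstar} to identify the sparsest nodes of $\mathcal{L}_\bbP$ with $\{G_{\po(G)} : G \in \cM(G^*)\}$ is correct, as is the observation that the conjecture reduces to producing, from any non-sparsest minimal IMAP, an outgoing arc in $\mathcal{L}_\bbP$ that weakly decreases the edge count together with a secondary potential ruling out lateral cycling --- exactly the structure of the Chickering/Solus-et-al.\ argument for GSP that the paper cites as the result this conjecture would generalize. You also correctly locate the obstruction: \cite{zhang2012transformational} characterizes legitimate mark changes only \emph{within} a Markov equivalence class, the legitimacy conditions are non-local (discriminating paths), and the re-projection $G' \mapsto G_{\po(G')}$ is defined through CI relations in $\bbP$ rather than by graph surgery, so its effect on the edge count is not controlled by any existing lemma in the paper. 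The missing ingredient --- a monotone transformational lemma for DMAGs across nested independence models --- is precisely what would be needed, and neither you nor the paper supplies it. Do not present this as a proof; if you want to contribute beyond the paper, the place to work is the descending-move lemma, perhaps first in the special case where $G_{\pi_0}$ and $G^*$ differ by a single skeleton edge.
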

\subsection{Implementation}\label{subsection:implementation}
\vspace{-5pt}A crucial practical consideration for GSPo is the choice of the starting poset $\pi_0$,
since a sparser initial IMAP would be favorable.
The empty poset $\emptyset$ provides a simple starting place, with $G_\emptyset = \{ i \leftrightarrow j \mid X_i \not\ci_\bbP X_j \}$, but in general will not be sparse. An effective alternative is to start at a sparse DAG that is a minimal IMAP (e.g., given by a permutation), either by running a DAG-learning algorithm such as GSP or by simply using the same starting heuristic as GSP based on the minimum-degree (MD) algorithm \citep{solus2017consistency}. We compare these initialization schemes in Section~\ref{section:simulation}.

\section{EXPERIMENTAL RESULTS}
\label{section:simulation}
In this section, we compare the performance of GSPo to FCI and FCI+ in recovering DMAGs from samples of the observed nodes. In each simulation, we sample 100  Erd{\"o}s-R{\'e}nyi DAGs on $p + K$ nodes with $s$ expected neighbors per node, then form DMAGs by marginalizing over the first $K$ nodes, to obtain DMAGs on $p$ nodes. 
If $i,j$ is an edge $i \rightarrow j$ in the DAG, we assign an edge weight $w_{ij}$ drawn uniformly at random from  $[-1, -.25] \cup [.25, 1]$; we set $w_{ij} = 0$ otherwise. Finally, we generate $n$ samples from the structural equation model $X = W^\top X + \epsilon$ where $\epsilon \sim \cN(0, I_{K+p})$ and remove the first $K$ columns of the data matrix.

\begin{algorithm}[t!]
	\caption{\textsc{Greedy Sparsest Poset (GSPo)}}
	\label{alg:gspo}
	\begin{algorithmic}
		\State{\textbf{Input:} $\cI(\bbP)$, with $\bbP$ restricted-faithful to $G^*$;  starting poset $\pi_0$; maximum depth $d$.}
		\State{\textbf{Output:} A minimal IMAP of $\bbP$.}
		\State Set $\pi = \pi_0$;
		\State Via depth-first search with root $\pi$ and depth at most $d$,
		find path $\pi_0,\dots,\pi_k$ such that $\pi_i$ and $\pi_{i+1}$ 
		are adjacent in $\mathcal{L}_{\bbP}$, 
		$|G_{\pi_i}| \ge |G_{\pi_{i+1}}|$
		and $|G_{\pi_0}| > |G_{\pi_k}|$.\\
		If such $\pi_k$ exists, set $\pi$ to $\pi_k$, and repeat this step.\\
		Otherwise, return $G_\pi$.
	\end{algorithmic}
\end{algorithm}

In each run of GSPo, we set the depth parameter $d$ to $4$, and run the algorithm 5 times for each graph (using different initializations). For DAGs, a depth of 4 has been used to reflect the empirically-observed average size of the MECs \citep{gillispie2001enumerating,solus2017consistency}. Although we are not aware of results on the average size of the MECs of DMAGs, we found little benefit in using values larger than $4$.

\begin{figure*}[h]
	\centering
	\begin{subfigure}[t]{.31\textwidth}
	    \includegraphics[width=\textwidth]{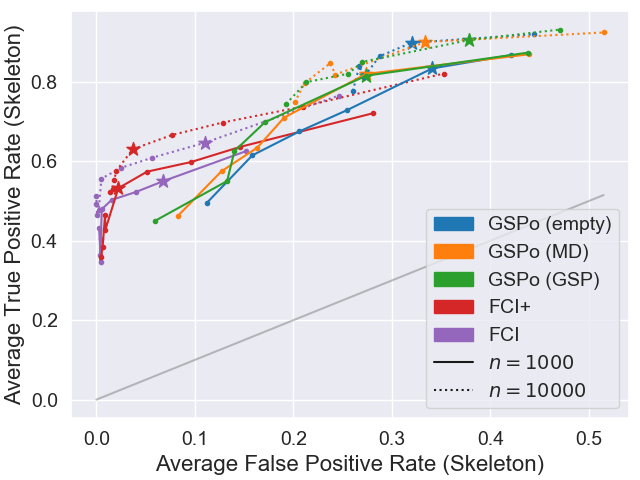}
	    \caption{Skeleton edge recovery (ROC)}
	    \label{fig:roc}
	\end{subfigure}
	~
	\begin{subfigure}[t]{.31\textwidth}
	    \includegraphics[width=\textwidth]{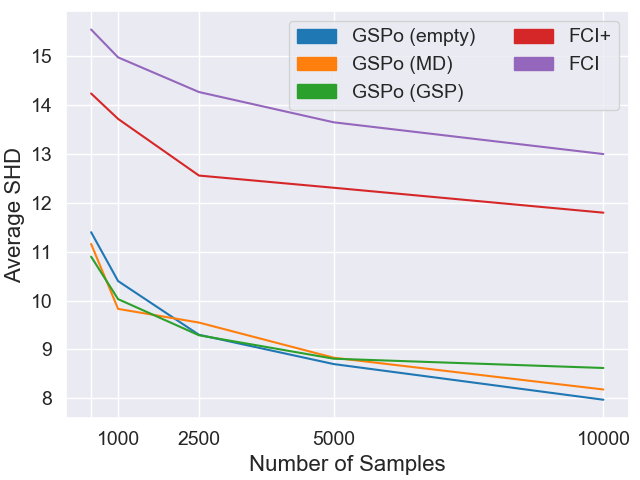}
	    \caption{Skeleton Edge Recovery (SHD)}
	    \label{fig:shd}
	    \end{subfigure}
	\begin{subfigure}[t]{.31\textwidth}
        \includegraphics[width=\textwidth]{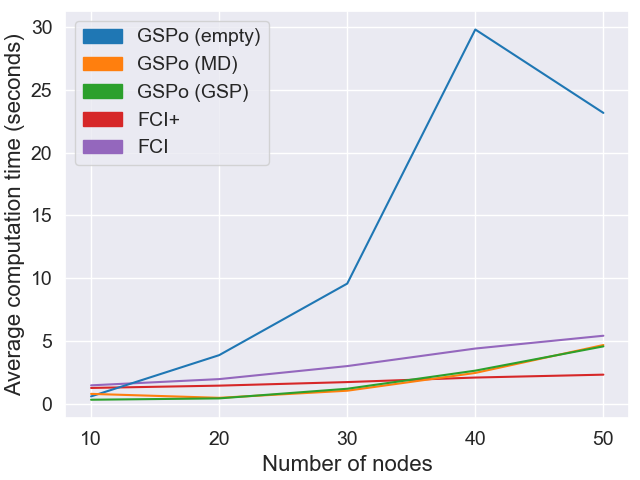}
	    \caption{Median Runtime}
	    \label{fig:computation-time}
	\end{subfigure}
	\caption{In (a) and (b), $p = 10$, $K = 3$, and $s = 3$. In (a), each variant of GSPo was run on 8 $\alpha$ values from $10^{-10}$ to $.7$, and each variant of FCI was run on 7 $\alpha$ values from $10^{-20}$ to $.5$. The best $\alpha$ for each algorithm was selected for (b); the corresponding point is marked by $\star$ in (a). These values were $\alpha=.1$ for each variant of GSPo and for FCI+, and $\alpha=.7$ for FCI. In (c), $p =$ 10, 20, 30, 40, 50, $K = 3$, and $s = 3$. Again the best $\alpha$ was selected for (c), except for FCI, which was run with $\alpha=10^{-3}$ since higher $\alpha$ values were extremely slow.} 
	\vspace{-0.2cm}
    \label{fig:performance}
\end{figure*}

In Figure \ref{fig:performance}, we chose $p = 10$, $K = 3$, and $s = 3$. 
The resulting graphs have on average about 4 neighbors per node, and have varying proportions of bidirected edges, from 0\% bidirected to 75\% bidirected, with roughly 30\% bidirected on average.

Figure \ref{fig:roc} shows performance of GSPo with three initialization schemes as compared to FCI and FCI+ on recovering the skeleton of the true MAG.
Regardless of the initialization scheme,
GSPo generally estimates denser graphs than FCI and FCI+,
with the densest graphs estimated when starting at the empty poset.
The performance of initializing GSPo by the MD algorithm and GSP are comparable, so for simplicity we recommend initializing by the MD algorithm. While FCI and FCI+ achieve better performance in the low false positive rate regime, GSPo begins to surpass FCI and FCI+ in the middle regime. This indicates that even with a large number of samples, FCI(+) suffers from near-faithfulness violations, which leads to mistakenly removing edges. 
ROC curves for $p = 50$ nodes are reported in Appendix~\ref{appendix:additional-simultations}, with similar findings.

Figure \ref{fig:shd} shows the structural Hamming distance (SHD)\footnote{the SHD between two undirected graphs is equal to the minimum number of edge additions/deletions required to transform from one graph to another} of the skeleton of the true DMAG to the skeleton of the DMAG estimated by each algorithm. For each algorithm, the value of $\alpha$ was picked from among the values used in Figure \ref{fig:roc} in order to minimize the average SHD over all sample sizes; the corresponding values are marked by stars on the ROC curves. All variants of GSPo outperform both variants of FCI for all sample sizes in terms of SHD.

Figure \ref{fig:computation-time} shows the median computation time required for each algorithm for graphs of varying number of vertices.
Average computation time is in Appendix \ref{appendix:additional-simultations}.
For each algorithm, we chose the parameter $\alpha$ based on the best-performing value in Figure \ref{fig:shd}; FCI we were limited to $\alpha = 10^{-3}$ due to its poor scaling for dense graphs.
Thus, the median runtime for FCI is a conservative lower bound.
We observe that GSPo with GSP initialization is  faster than FCI or FCI+ for small graphs,
but slower than FCI+ as the number of nodes increases.
Given that CI tests in the construction of $G_\pi$ involve all ancestors of pairs of nodes,
this poor scaling is expected.
Fortunately, this suggests that improvements along the lines of those in FCI+ may bring the scaling of GSPo in line with that of FCI+.

\section{DISCUSSION}
\label{section:discussion}
We provided a new characterization of the Markov equivalence class of a DMAG in terms of the set of sparsest minimal IMAPs, which allows structure learning in the presence of latent confounders to be expressed as a discrete optimization problem. To restrict the search space for this problem, we introduced a map from posets to minimal IMAPs whose image contains the true DMAG.
Then, we proposed a greedy algorithm in the space of minimal IMAPs to determine the sparsest minimal IMAP and hence a graph that is Markov equivalent to the true DMAG. This algorithm extends the Greedy Sparsest Permutation algorithm~\citep{solus2017consistency} for learning DAGs to the setting with latent confounders, thereby providing a general hybrid approach for causal structure discovery in this setting. We also demonstrated that it outperforms the current constraint-based methods FCI and FCI+ in some relevant settings.

Consistency of our greedy algorithm remains an open question, and is an interesting issue for future work. Furthermore, it may be possible to improve the statistical and computational performance of GSPo through modifications such as: more efficiently obtaining minimal IMAPs after legitimate mark changes, using dynamic connectivity algorithms to keep track of ancestral relations, and better heuristics for initialization.

By introducing a method for structure learning for DMAGs that is not a variant of FCI, we open the door to comparisons between the behavior of different types of methods on issues besides just statistical and computational performance, such as behavior of the algorithms under misspecification of parametric or modeling assumptions (e.g., non-i.i.d. data or non-Gaussianity when using partial correlation tests). 
It would also be interesting to use the idea of an ordering-based search as provided in this paper for the problem of learning general MAGs (i.e., including selection bias). To the best of our knowledge, there is no known transformational characterization for Markov equivalence classes of general MAGs yet, which is a key ingredient in the development of such a greedy algorithm.

\newpage
\subsubsection*{Acknowledgements}
Daniel Irving Bernstein was funded by an NSF Mathematical Sciences Postdoctoral Research Fellowship (DMS-1802902). Basil Saeed was partially supported by the Abdul Latif Jameel Clinic for Machine Learning in Health at MIT.
Chandler Squires was supported by an NSF Graduate Research Fellowship and an MIT Presidential Fellowship. Caroline Uhler was partially supported by NSF (DMS-1651995), ONR (N00014-17-1-2147 and N00014-18-1-2765), IBM, a Sloan Fellowship and a Simons Investigator Award.

\bibliography{bib.bib}

\clearpage
\newpage
\section*{APPENDIX}

\appendix 
\newcommand{\snum}{S}
\renewcommand{\theequation}{\snum.\arabic{equation}}
\counterwithin{algorithm}{section}
\counterwithin{figure}{section}

\section{Graph Theory}
\label{appendix:graph-theory}
This section provides additional graph-theoretic notations that are standard in the literature and are provided for ease of access.
Let $G = (V,D,B)$ be a graph.
If there is any edge between $i$ and $j$, they are called \emph{adjacent} which we may denote $i \sim j$.
Otherwise they are called \emph{non-adjacent} and we write $i \notadj j$.
We will use $\circ$ as a ``wildcard" for edge marks, i.e. $i \rightarrowcirc j$ denotes that either $i \rightarrow j$ or $i \leftrightarrow j$.
We will use subscripts on these vertex relations as a shorthand way to indicate
the presence or absence of an edge, or the presence of a particular kind of edge.
For example, $i \leftrightarrow_G j$ and $k \notadj_G l$ respectively indicate that $G$ has a bidirected edge between $i$ and $j$,
and no edge between $k$ and $l$.
A graph with only directed edges is called a \emph{directed graph}.

A \emph{path} $\gamma = \langle v_1, v_2, \ldots, v_k \rangle$ is a sequence of distinct nodes that such that $v_i$ and $v_{i+1}$ are adjacent.
A \emph{cycle} is a path together with any type of edge between $v_k$ and $v_{k+1} = v_1$.
A path or a cycle is called \emph{directed} if all edges are directed toward later nodes, i.e. $v_i \rightarrow v_{i+1}$.

We extend the notation $\pa_G(i), \spo_G(i),$ and $\an_G(i)$
to allow arguments that are subsets of vertices by taking unions. For example, when $S \subseteq V$, we have
$$
\pa_G(S) := \cup_{i \in S} \pa_G(i).
$$
We add an asterisk to denote that the arguments are not included in the set, e.g.
$$
\pa^*_G(S) := \pa_G(S) \setminus S.
$$


The \emph{colliders} on a path $\gamma$ are the nodes where two arrowheads meet, i.e., $v_i$ is a collider if $v_{i-1} \rightarrowcirc v_i \leftarrowcirc v_{i+1}$.
A triple of nodes $(i, j, k)$ is called a \emph{v-structure} if $j$ is a collider on the path $\langle i, j, k \rangle$ and $i \notadj k$.

\section{Proof of Proposition \ref{prop:construction}}
\label{proof_prop1}
We will prove Proposition \ref{prop:construction} via a sequence of intermediate Lemmas.
Since our goal is to prove that all the $m$-separation statements of a given DMAG
are satisfied by a given $\bbP$,
it will be helpful to have the following lemma which reduces the number of $m$-separation statements we must consider.

\begin{lemma}\label{lemma:pairwiseMarkov}
    Let $G^*$ and $H$ be DMAGs.
    Then $G^* \le H$ if and only if whenever $i\notadj_H j$,
    $i$ is $\an_{H}(\{i,j\}$-separated from $j$ in $G^*$, i.e.
    $i \ci_{G^*} j \mid \an_{H}(\{i,j\})$.
\end{lemma}
\begin{proof}
    This is an immediate consequence of Theorem~3 in (Sadeghi and Lauritzen, 2014).
\end{proof}

Throughout the rest of this section,
let it be understood that $G^*$ is a DMAG that is restricted-faithful to some fixed joint distribution $\bbP$.
We will not repeat this assumption.
Moreover, we will suppress $\bbP$ in our notation and write
$G_\pi$ instead of $G_\pi$ and $AG(\pi)$ instead of $AG(\pi,\bbP)$.
Also, note that when $H$ is a DMAG, $\po(H) = \po(\overline{H})$
since $\overline{H}$ is obtained from $H$ by adding only bidirected edges \citep{richardson2002ancestral}.
We will make repeated tacit use of this fact.

\begin{lemma}\label{lemma:imapIfFixedPoint}
    Let $\pi$ be a partial order on the random variables of $\bbP$
    such that $G_\pi = \overline{AG(\pi)}$.
    Then $G_\pi$ is an IMAP of $\bbP$.
\end{lemma}
\begin{proof}
    Lemma \ref{lemma:pairwiseMarkov} implies that it suffices to show that
    whenever $i\notadj_{G_\pi} j$, $X_i \ci_\bbP X_j \mid X_{\an_{G_\pi}^*(i,j)}$.
    So assume $i \notadj_{G_\pi} j$.
    Since $G_\pi = \overline{AG(\po(AG(\pi)))}$,
    $i \notadj_{G_\pi} j$ implies $X_i \ci_{\bbP} X_j \mid X_{\pre^*_{\po(AG(\pi))}(i,j)}$.
    But now we are done since $\pre^*_{\po(AG(\pi))}(i,j)=\an_{AG(\pi)}^*(i,j)=\an_{\overline{AG(\pi)}}^*(i,j)$
    and we are assuming $G_\pi = \overline{AG(\pi)}$.
\end{proof}

\begin{lemma}\label{lemma:fixedPoint}
    Let $\pi$ be a partial order on the random variables of $\bbP$.
    Then $\po(G_\pi) = \po(AG(\pi))$.
\end{lemma}
\begin{proof}
    We must show 
    $$\po(AG(\po(AG(\pi)))) = \po(AG(\pi))$$
    If $i \le j$ in $\po(AG(\po(AG(\pi))))$,
    then there exists a directed path $i = i_0\rightarrow \dots \rightarrow i_k = j$
    in $AG(\po(AG(\pi)))$
    and so $i = i_0 \le \dots \le i_k = j$ in $\po(AG(\pi))$.
    
    We now proceed to show that if $i \le j$ in $\po(AG(\pi))$,
    then the same is true in $\po(AG(\po(AG(\pi))))$.
    We do this by showing that if $i \rightarrow_{AG(\pi)} j$, then $i \rightarrow_{AG(\po(AG(\pi)))} j$.
    So for the sake of contradiction, assume that $i \rightarrow_{AG(\pi)} j$ but not $i \rightarrow_{AG(\po(AG(\pi)))} j$.
    By the definition of $AG$,
    this implies that $i \notadj_{AG(\po(AG(\pi)))} j$ and so $i$ is m-separated from $j$ given $\an_{AG(\pi)}^*(i,j)$ in $G^*$.
    But $i\rightarrow_{AG(\pi)} j$ implies that $i$ is m-connected to $j$ given $\pre_\pi^*(i,j)$ in $G^*$.
    Let $P$ be an m-connecting path from $i$ to $j$ given $\pre_{\pi}^*(i,j)$ in $G^*$.
    Since $\an_{AG(\pi)}^*(i,j) \subseteq \pre_{\pi}^*(i,j)$, we can write
    \[
        \pre_{\pi}^*(i,j) = \an_{AG(\pi)}^*(i,j) \cup S
    \]
    for some nonempty set $S$, disjoint from $\an_{AG(\pi)}^*(i,j)$.
    Since $i$ is m-separated from $j$ given $\an_{AG(\pi)}^*(i,j)$ in $G^*$,
    $P$ must contain a collider with a descendent in $S$, but no descendant in $\an_{AG(\pi)}^*(i,j)$.
    Let $d$ be such a collider that is closest to $j$ along $P$
    and let $s$ be a $\po(G^*)$-minimal descendent of $d$ from $S$.
    
    We now construct a path $Q$ in $G^*$ that m-connects $j$ and $s$ given $\pre_\pi^*(j,s)$.
    Since $S \subseteq \pre_\pi(j)$, this would imply existence of the edge $s\rightarrow_{AG(\pi)} j$,
    contradicting $s \in S$.
    If $s = d$, we let $Q$ be the subpath of $P$ from $j$ to $s$.
    Otherwise, we let $Q$ be obtained by concatenating the subpath of $P$ from $j$ to $d$,
    followed by a directed path from $d$ to $s$.
    Since $P$ is m-connecting given $\pre_{\pi}^*(i,j)$ and $i,s \le j$ in $\pi$,
    it follows that when $Q$ is a subpath of $P$,
    $Q$ is m-connecting given $\pre_{\pi}^*(j,s)$.
    When $Q$ additionally has a directed path from $d$ to $s$,
    $Q$ is m-connecting given $\pre_{\pi}^*(j,s)$ since the non-$P$ segment has no colliders,
    and assumptions on $d$ and $\po(G^*)$-minimality of $s$ imply that no element of this segment is in $\an_{AG(\pi)}^*(s,j)$.
\end{proof}

\begin{proof}[Proof of Proposition \ref{prop:construction}]
    Define $\tau := \po(AG(\pi))$.
    Since $\po(H) = \po(\overline H)$ for any DMAG $H$,
    we have
    \begin{align*}
        G_\tau &= \overline{AG(\po(G_\pi))}.
    \end{align*}
    Lemma \ref{lemma:fixedPoint} implies that this is equal to $\overline{AG(\po(AG(\pi)))}$,
    which is equal to both $G_\pi$ and $\overline{AG(\tau)}$.
    Thus we have shown that $G_\pi = G_\tau = \overline{AG(\tau)}$
    and so Lemma \ref{lemma:imapIfFixedPoint} implies that $G_\pi$ is an IMAP of $\bbP$.
    
    We now show that $G_\pi$ is a \emph{minimal} IMAP of $\bbP$,
    i.e. that removing any edge results in a directed ancestral graph that is either not maximal,
    or not an IMAP of $\bbP$.
    Let $i,j$ be such that $i\sim_{G_\pi} j$
    and let $G'$ be the graph obtained from $G_\pi$ by removing the edge between $i$ and $j$.
    If $G'$ is still maximal,
    then Lemma \ref{lemma:pairwiseMarkov} implies that
    $i$ is m-separated from $j$ given $\an_{G'}^*(i,j)$ in $G'$.
    If $G^* \le G'$, then $i$ is m-separated from $j$ given $\an_{G'}^*(i,j)$ in $G^*$.
    Note that $\an_{G'}^*(i,j) = \an_{G_\pi}^*(i,j)$,
    and that Lemma \ref{lemma:imapIfFixedPoint} implies that $\an_{G_\pi}^*(i,j) = \pre_{\po(AG(\pi))}^*(i,j)$.
    But if $i$ were $\pre_{\po(AG(\pi))}^*(i,j)$-separated from $j$ in $G^*$,
    then $X_i \ci_\bbP X_j \mid X_{\pre_{\po(AG(\pi))}^*(i,j)}$ and so $i\notadj_{AG(\pi)} j$.
    This would imply that $AG(\pi)$ is a subgraph of $G'$.
    Since $G'$ is maximal, $G_\pi$ would be a subgraph as well
    contradicting $i\sim_{G_\pi} j$.
\end{proof}

\section{Proof of Theorem~\ref{thm:sparsest_imap}}
\label{proof_thm1}

We begin by proving the following lemma, which extends classic results for the case of DAGs and deals with discriminating paths.

\begin{lemma}
\label{lemma:faithful-imap-relations}
Let $G^*$ and $H$ be DMAGs and let $\bbP$ be a distribution that is Markov to both $G^*$ and $H$.
If $\bbP$ is adjacency-faithful to $G^*$, then
\begin{itemize}
    \item[(a)] $\skel(G^*) \subseteq \skel(H)$.
\end{itemize}
If $\bbP$ is furthermore orientation-faithful to $G^*$, then
\begin{itemize}
    \item[(b)] If $i \rightarrowcirc k \leftarrowcirc j$ is a v-structure in $G^*$, then either $i \rightarrowcirc k \leftarrowcirc j$ is a v-structure in $H$ or $i \sim_H j$.
    \item[(c)] If $i \rightarrowcirc k \leftarrowcirc j$ is a v-structure in $H$, then either $i \rightarrowcirc k \leftarrowcirc j$ is a v-structure in $G^*$, or $i \notadj_{G^*} k$ or $j \notadj_{G^*} k$.
\end{itemize}
Finally, if $\bbP$ is also discriminating-faithful to $G^*$, then
\begin{itemize}
    \item[(d)] If $\gamma:=\langle i,\dots,k,j\rangle$ is a discriminating path in both $H$ and $G^*$,
    then $k$ is a collider in $\gamma$ in $H$ iff $k$ is a collider in $\gamma$ in $G^*$.
\end{itemize}
\end{lemma}

\label{appendix:proof-lemma-1}
\begin{proof}
(a) If $i \notadj_H j$, then by the pairwise Markov property \citep{richardson2002ancestral}, $X_i \ci_\bbP X_j \mid X_{\an^*_H({i, j})}$, and by adjacency-faithfulness, $i \notadj_{G^*} j$ in $G^*$.

(b) Let $i \notadj_H j$, so $X_i \ci_\bbP X_j \mid X_{\an^*_H(\{i, j\})}$. Suppose $k$ is a parent of either $i$ or $j$. Since $k \in \an^*_H(\{i, j\})$, $i$ is m-connected to $j$ in $G^*$ given $\an^*_H(\{i, j\})$ by the path $i \rightarrowcirc k \leftarrowcirc j$, and thus $X_i \not\ci_\bbP X_j \mid X_{\an_H(\{i, j\})}$ by orientation faithfulness. Hence, $H$ is not an I-MAP of $\bbP$.

(c) Suppose $i \sim_G k$ and $j \sim_G k$. We have $X_i \ci_\bbP X_j \mid X_{\an_H^*(\{i, j\})}$, and thus by orientation faithfulness $i$ and $j$ are m-separated given $\an^*_H(\{i, j\})$ in $G^*$. Since $H$ is ancestral, $k \not\in \an^*_H(\{i, j\})$. Thus, to ensure the required m-separation in $G$, $k$ must be a collider in $G$ on the path $i - k - j$.

(d) Assume $\gamma = \langle i,C_1,\dots,C_l,k,j\rangle$.
If $k$ is a non-collider in $\gamma$ in $G^*$, then $i$ is m-connected to $j$ given $S$ for every $S$ containing $C_1,\dots,C_l$ but not $k$.
Discriminating faithfulness implies $X_i \not\ci_\bbP X_j \mid X_{S}$ for every such $S$.
Then $k$ must also be a non-collider in $\gamma$ in $H$,
since otherwise there would exist some $S$ containing $C_1,\dots,C_l$ but not $k$ such that $i$ is m-separated from $j$ given $S$ in $H^*$,
contradicting $\mathcal{I}(H) \subseteq \mathcal{I}(\bbP)$.
If $K$ is a collider in $\gamma$ in $G^*$,
then $i$ is m-connected to $j$ given $S$ for every $S$ containing $C_1,\dots,C_l,k$.
Again, discriminating faithfulness implies $X_i \not\ci_\bbP X_j \mid X_S$ for every such $S$.
Then $K$ must also be a collider in $\gamma$ in $H$,
since otherwise there would exist some $S$ containing $C_1,\dots,C_l,k$ such that $i$ is m-separated from $j$ given $S$ in $H^*$.
\end{proof}

We proceed to proving the theorem.
\begin{proof}[Proof of Theorem \ref{thm:sparsest_imap}]
(a) is implied by Lemma \ref{lemma:faithful-imap-relations}(a).

Since restricted faithfulness implies adjacency faithfulness, $\skel(G) = \skel(G^*)$.
It remains to show that $G$ and $G^*$ have the same v-structures, and that if $\gamma$ is a discriminating path for $k$ in both $G$ and $G^*$, then $k$ is a collider on $\gamma$ in $G$ iff it is a collider on $\gamma$ in $G^*$.

Equality of skeletons together with Lemma \ref{lemma:faithful-imap-relations}(b) and (c) imply that $G$ and $H$ have the same v-structures.
If $\gamma:=\langle i, C_1,\dots,C_l,k,j\rangle$ is a discriminating path in both $G^*$ and $G$,
then Lemma \ref{lemma:faithful-imap-relations}(d) implies that $k$ is a collider in $\gamma$ in $G^*$ iff $k$ is a collider in $\gamma$ in $G$.
\end{proof}

\section{Proof of Proposition 2}
\label{proof_prop2}
\begin{proof}
It is sufficient to show this for $G = G^*$, since Markov equivalence implies that $\cI(G) = \cI(G^*).$ 
Suppose $G = (V,D,B)$. 
Let $\pi = \po(G)$. 
We have already shown that $G_\pi$ is an IMAP. Therefore, it is sufficient to show the converse, i.e., that 
if $X_i \ci_\bbP X_j \mid S$ then $i\ci_{G_\pi} j \mid S$.

By Theorem 4.2 of~\cite{richardson2002ancestral}, for any $i,j\in V$ adjacent, $i\not\ci_{G_\pi} j | \an^*_{G_\pi}(i,j).$
The faithfulness condition would then imply that $X_i\not\ci_{\bbP} X_j | X_{\pre^*_\pi(i,j)}.$
\end{proof}

\section{Conjecture Simulations}\label{appendix:sparsity_plot}
In figure~\ref{fig:sparsity}, we display a scatter plot of the number of edges of the graphs that we tested our algorithm on, without failure. The plot includes over 200,000 points, corresponding to 200,000 generated graphs of various parameters. For each of these, graphs, we have tested the oracle version of our algorithm, i.e., $\cI(\bbP) = \cI(G^*)$, and it converged to a graph in the Markov equivalence class of the true graph. We have not found a single counterexample to the conjecture thus far.

%

\begin{figure}[t!]
	\centering
        \includegraphics[width=.98\linewidth]{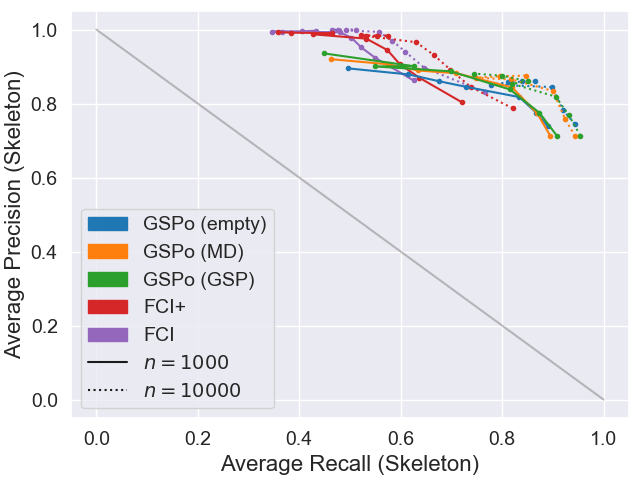}
        \caption{Average performance over 100 MAGs for each algorithm, when $p = 50$, $K = 12$, and $s = 3$. Each variant of GSPo was run on 8 $\alpha$ values from $10^{-10}$ to $.7$, and each variant of FCI was run on 7 $\alpha$ values from $10^{-20}$ to $.5$} 
    \label{fig:precision-recall}
\end{figure}

\begin{figure}[t!]
	\centering
        \includegraphics[width=.98\linewidth]{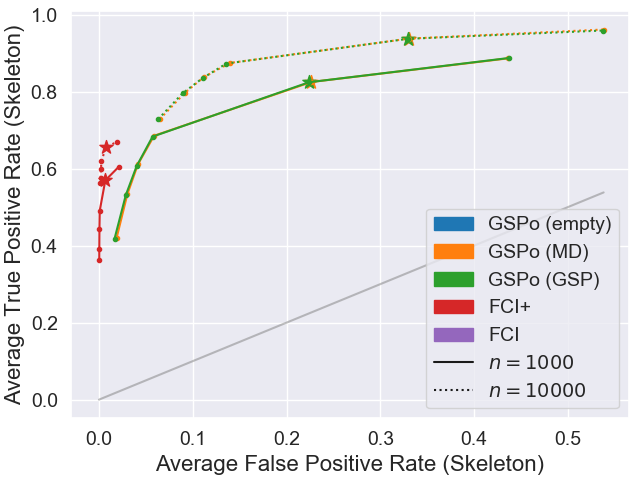}
        \caption{Average performance over 100 MAGs for each algorithm, when $p = 50$, $K = 12$, and $s = 3$. Each variant of GSPo was run on 8 $\alpha$ values from $10^{-10}$ to $.7$, and each variant of FCI was run on 7 $\alpha$ values from $10^{-20}$ to $.5$} 
    \label{fig:performance-large}
\end{figure}

\begin{figure}[t!]
	\centering
        \includegraphics[width=.98\linewidth]{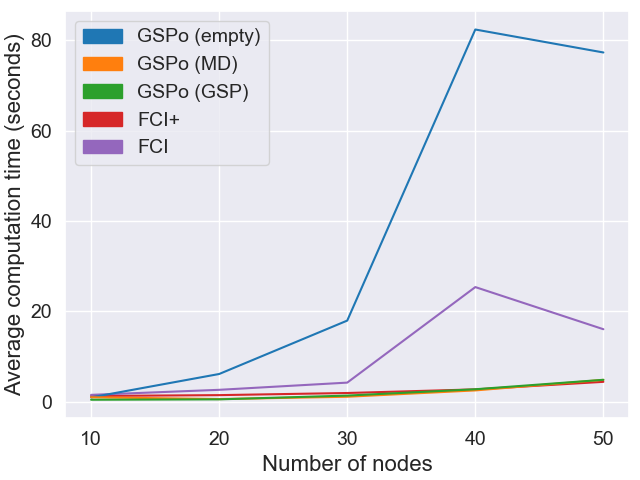}
        \caption{Average runtime over 100 MAGs for $p =$ 10, 20, 30, 40, 50, $K = 3$, and $s =3$. Each variant of GSPo and FCI+ were run with $\alpha=.1$, while FCI was run with $\alpha=10^{-3}$ due to the extremely long runtime of higher $\alpha$ values.} 
    \label{fig:time-mean}
\end{figure}

\begin{figure*}[t]
	\centering
        \includegraphics[width=.98\linewidth]{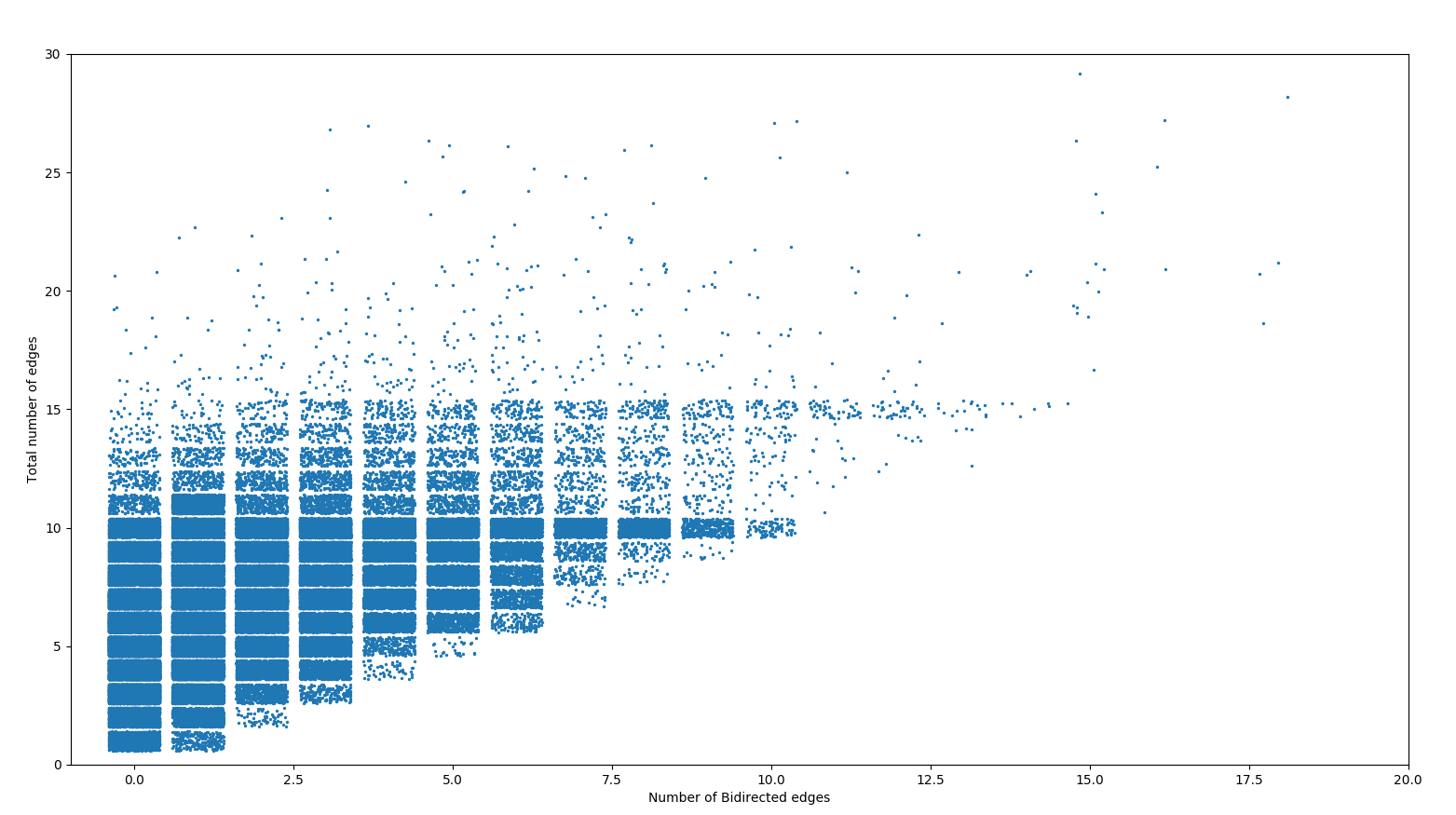}
        \caption{A scatter plot of the number of edges of the graphs that we tested the oracle version of our algorithm on. The plot includes over 200,000 points, representing graphs with varying number of bidirected edges and total number of edges.} 
    \label{fig:sparsity}
\end{figure*}

\section{Additional Simulations}\label{appendix:additional-simultations}

In this section, we followed the same procedure for DMAG sampling procedure as described in Section \ref{section:simulation}. Fig. \ref{fig:precision-recall} gives the precision-recall curve for the same settings as in Fig. \ref{fig:roc} in Section \ref{section:simulation}.

In Figure \ref{fig:performance-large}, we use $p = 50$ nodes, $K = 12$ latent variables, and $s = 3$ expected neighbors per node in the DAG before marginalization. For 100 graphs, we find that this results in MAGs with an average of 43\% bidirected edges, ranging from 14\% to 71\% bidirected edges, and an average of 5 neighbors per node in the MAGs. Due to the slow runtime of FCI, GSPo with empty initialization, and FCI+ with high $\alpha$ values, our comparison between the algorithms for larger graphs is limited, and mainly serves to demonstrate that GSPo has similar performance on larger graphs for the same range of $\alpha$ values.

In Figure \ref{fig:time-mean}, we use the same set of DMAGs as used in \ref{fig:computation-time}, in particular, $p =$ 10, 20, 30, 40, 50, $K = 3$, and $s = 3$, but report the average computation time instead of the median computation time. We can observe that GSPo with the empty initialization and FCI both have much higher average computation times than median computation times, indicating that they are more susceptible to outlier instances from our sampled MAGs.

\end{document}


\twocolumn[
\aistatstitle{
Supplementary Material: Ordering-Based Causal Structure Learning \\in the Presence of Latent Variables
}
\aistatsauthor{ Daniel Irving Bernstein$^*$ \And Basil Saeed$^*$ \And Chandler Squires$^*$ \And Caroline Uhler}
\aistatsaddress{ MIT \And MIT \And MIT \And MIT} ]
\appendix

\section{Graph Theory}
\label{appendix:graph-theory}
This section provides additional graph-theoretic notations that are standard in the literature and are provided for ease of access.
Let $G = (V,D,B)$ be a graph.
If there is any edge between $i$ and $j$, they are called \emph{adjacent} which we may denote $i \sim j$.
Otherwise they are called \emph{non-adjacent} and we write $i \notadj j$.
We will use $\circ$ as a ``wildcard" for edge marks, i.e. $i \rightarrowcirc j$ denotes that either $i \rightarrow j$ or $i \leftrightarrow j$.
We will use subscripts on these vertex relations as a shorthand way to indicate
the presence or absence of an edge, or the presence of a particular kind of edge.
For example, $i \leftrightarrow_G j$ and $k \notadj_G l$ respectively indicate that $G$ has a bidirected edge between $i$ and $j$,
and no edge between $k$ and $l$.
A graph with only directed edges is called a \emph{directed graph}.

A \emph{path} $\gamma = \langle v_1, v_2, \ldots, v_k \rangle$ is a sequence of distinct nodes that such that $v_i$ and $v_{i+1}$ are adjacent.
A \emph{cycle} is a path together with any type of edge between $v_k$ and $v_{k+1} = v_1$.
A path or a cycle is called \emph{directed} if all edges are directed toward later nodes, i.e. $v_i \rightarrow v_{i+1}$.

We extend the notation $\pa_G(i), \spo_G(i),$ and $\an_G(i)$
to allow arguments that are subsets of vertices by taking unions. For example, when $S \subseteq V$, we have
$$
\pa_G(S) := \cup_{i \in S} \pa_G(i).
$$
We add an asterisk to denote that the arguments are not included in the set, e.g.
$$
\pa^*_G(S) := \pa_G(S) \setminus S.
$$


The \emph{colliders} on a path $\gamma$ are the nodes where two arrowheads meet, i.e., $v_i$ is a collider if $v_{i-1} \rightarrowcirc v_i \leftarrowcirc v_{i+1}$.
A triple of nodes $(i, j, k)$ is called a \emph{v-structure} if $j$ is a collider on the path $\langle i, j, k \rangle$ and $i \notadj k$.








\section{Proof of Proposition \ref{prop:construction}}
\label{proof_prop1}
We will prove Proposition \ref{prop:construction} via a sequence of intermediate Lemmas.
Since our goal is to prove that all the $m$-separation statements of a given DMAG
are satisfied by a given $\bbP$,
it will be helpful to have the following lemma which reduces the number of $m$-separation statements we must consider.

\begin{lemma}\label{lemma:pairwiseMarkov}
    Let $G^*$ and $H$ be DMAGs.
    Then $G^* \le H$ if and only if whenever $i\notadj_H j$,
    $i$ is $\an_{H}(\{i,j\}$-separated from $j$ in $G^*$, i.e.
    $i \ci_{G^*} j \mid \an_{H}(\{i,j\})$.
\end{lemma}
\begin{proof}
    This is an immediate consequence of Theorem~3 in (Sadeghi and Lauritzen, 2014).
\end{proof}

Throughout the rest of this section,
let it be understood that $G^*$ is a DMAG that is restricted-faithful to some fixed joint distribution $\bbP$.
We will not repeat this assumption.
Moreover, we will suppress $\bbP$ in our notation and write
$G_\pi$ instead of $G_\pi$ and $AG(\pi)$ instead of $AG(\pi,\bbP)$.
Also, note that when $H$ is a DMAG, $\po(H) = \po(\overline{H})$
since $\overline{H}$ is obtained from $H$ by adding only bidirected edges \citep{richardson2002ancestral}.
We will make repeated tacit use of this fact.

\begin{lemma}\label{lemma:imapIfFixedPoint}
    Let $\pi$ be a partial order on the random variables of $\bbP$
    such that $G_\pi = \overline{AG(\pi)}$.
    Then $G_\pi$ is an IMAP of $\bbP$.
\end{lemma}
\begin{proof}
    Lemma \ref{lemma:pairwiseMarkov} implies that it suffices to show that
    whenever $i\notadj_{G_\pi} j$, $X_i \ci_\bbP X_j \mid X_{\an_{G_\pi}^*(i,j)}$.
    So assume $i \notadj_{G_\pi} j$.
    Since $G_\pi = \overline{AG(\po(AG(\pi)))}$,
    $i \notadj_{G_\pi} j$ implies $X_i \ci_{\bbP} X_j \mid X_{\pre^*_{\po(AG(\pi))}(i,j)}$.
    But now we are done since $\pre^*_{\po(AG(\pi))}(i,j)=\an_{AG(\pi)}^*(i,j)=\an_{\overline{AG(\pi)}}^*(i,j)$
    and we are assuming $G_\pi = \overline{AG(\pi)}$.
\end{proof}

\begin{lemma}\label{lemma:fixedPoint}
    Let $\pi$ be a partial order on the random variables of $\bbP$.
    Then $\po(G_\pi) = \po(AG(\pi))$.
\end{lemma}
\begin{proof}
    We must show 
    $$\po(AG(\po(AG(\pi)))) = \po(AG(\pi))$$
    If $i \le j$ in $\po(AG(\po(AG(\pi))))$,
    then there exists a directed path $i = i_0\rightarrow \dots \rightarrow i_k = j$
    in $AG(\po(AG(\pi)))$
    and so $i = i_0 \le \dots \le i_k = j$ in $\po(AG(\pi))$.
    
    We now proceed to show that if $i \le j$ in $\po(AG(\pi))$,
    then the same is true in $\po(AG(\po(AG(\pi))))$.
    We do this by showing that if $i \rightarrow_{AG(\pi)} j$, then $i \rightarrow_{AG(\po(AG(\pi)))} j$.
    So for the sake of contradiction, assume that $i \rightarrow_{AG(\pi)} j$ but not $i \rightarrow_{AG(\po(AG(\pi)))} j$.
    By the definition of $AG$,
    this implies that $i \notadj_{AG(\po(AG(\pi)))} j$ and so $i$ is m-separated from $j$ given $\an_{AG(\pi)}^*(i,j)$ in $G^*$.
    But $i\rightarrow_{AG(\pi)} j$ implies that $i$ is m-connected to $j$ given $\pre_\pi^*(i,j)$ in $G^*$.
    Let $P$ be an m-connecting path from $i$ to $j$ given $\pre_{\pi}^*(i,j)$ in $G^*$.
    Since $\an_{AG(\pi)}^*(i,j) \subseteq \pre_{\pi}^*(i,j)$, we can write
    \[
        \pre_{\pi}^*(i,j) = \an_{AG(\pi)}^*(i,j) \cup S
    \]
    for some nonempty set $S$, disjoint from $\an_{AG(\pi)}^*(i,j)$.
    Since $i$ is m-separated from $j$ given $\an_{AG(\pi)}^*(i,j)$ in $G^*$,
    $P$ must contain a collider with a descendent in $S$, but no descendant in $\an_{AG(\pi)}^*(i,j)$.
    Let $d$ be such a collider that is closest to $j$ along $P$
    and let $s$ be a $\po(G^*)$-minimal descendent of $d$ from $S$.
    
    We now construct a path $Q$ in $G^*$ that m-connects $j$ and $s$ given $\pre_\pi^*(j,s)$.
    Since $S \subseteq \pre_\pi(j)$, this would imply existence of the edge $s\rightarrow_{AG(\pi)} j$,
    contradicting $s \in S$.
    If $s = d$, we let $Q$ be the subpath of $P$ from $j$ to $s$.
    Otherwise, we let $Q$ be obtained by concatenating the subpath of $P$ from $j$ to $d$,
    followed by a directed path from $d$ to $s$.
    Since $P$ is m-connecting given $\pre_{\pi}^*(i,j)$ and $i,s \le j$ in $\pi$,
    it follows that when $Q$ is a subpath of $P$,
    $Q$ is m-connecting given $\pre_{\pi}^*(j,s)$.
    When $Q$ additionally has a directed path from $d$ to $s$,
    $Q$ is m-connecting given $\pre_{\pi}^*(j,s)$ since the non-$P$ segment has no colliders,
    and assumptions on $d$ and $\po(G^*)$-minimality of $s$ imply that no element of this segment is in $\an_{AG(\pi)}^*(s,j)$.
\end{proof}

\begin{proof}[Proof of Proposition \ref{prop:construction}]
    Define $\tau := \po(AG(\pi))$.
    Since $\po(H) = \po(\overline H)$ for any DMAG $H$,
    we have
    \begin{align*}
        G_\tau &= \overline{AG(\po(G_\pi))}.
    \end{align*}
    Lemma \ref{lemma:fixedPoint} implies that this is equal to $\overline{AG(\po(AG(\pi)))}$,
    which is equal to both $G_\pi$ and $\overline{AG(\tau)}$.
    Thus we have shown that $G_\pi = G_\tau = \overline{AG(\tau)}$
    and so Lemma \ref{lemma:imapIfFixedPoint} implies that $G_\pi$ is an IMAP of $\bbP$.
    
    We now show that $G_\pi$ is a \emph{minimal} IMAP of $\bbP$,
    i.e. that removing any edge results in a directed ancestral graph that is either not maximal,
    or not an IMAP of $\bbP$.
    Let $i,j$ be such that $i\sim_{G_\pi} j$
    and let $G'$ be the graph obtained from $G_\pi$ by removing the edge between $i$ and $j$.
    If $G'$ is still maximal,
    then Lemma \ref{lemma:pairwiseMarkov} implies that
    $i$ is m-separated from $j$ given $\an_{G'}^*(i,j)$ in $G'$.
    If $G^* \le G'$, then $i$ is m-separated from $j$ given $\an_{G'}^*(i,j)$ in $G^*$.
    Note that $\an_{G'}^*(i,j) = \an_{G_\pi}^*(i,j)$,
    and that Lemma \ref{lemma:imapIfFixedPoint} implies that $\an_{G_\pi}^*(i,j) = \pre_{\po(AG(\pi))}^*(i,j)$.
    But if $i$ were $\pre_{\po(AG(\pi))}^*(i,j)$-separated from $j$ in $G^*$,
    then $X_i \ci_\bbP X_j \mid X_{\pre_{\po(AG(\pi))}^*(i,j)}$ and so $i\notadj_{AG(\pi)} j$.
    This would imply that $AG(\pi)$ is a subgraph of $G'$.
    Since $G'$ is maximal, $G_\pi$ would be a subgraph as well
    contradicting $i\sim_{G_\pi} j$.
\end{proof}





\section{Proof of Theorem~\ref{thm:sparsest_imap}}
\label{proof_thm1}

We begin by proving the following lemma, which extends classic results for the case of DAGs and deals with discriminating paths.

\begin{lemma}
\label{lemma:faithful-imap-relations}
Let $G^*$ and $H$ be DMAGs and let $\bbP$ be a distribution that is Markov to both $G^*$ and $H$.
If $\bbP$ is adjacency-faithful to $G^*$, then
\begin{itemize}
    \item[(a)] $\skel(G^*) \subseteq \skel(H)$.
\end{itemize}
If $\bbP$ is furthermore orientation-faithful to $G^*$, then
\begin{itemize}
    \item[(b)] If $i \rightarrowcirc k \leftarrowcirc j$ is a v-structure in $G^*$, then either $i \rightarrowcirc k \leftarrowcirc j$ is a v-structure in $H$ or $i \sim_H j$.
    \item[(c)] If $i \rightarrowcirc k \leftarrowcirc j$ is a v-structure in $H$, then either $i \rightarrowcirc k \leftarrowcirc j$ is a v-structure in $G^*$, or $i \notadj_{G^*} k$ or $j \notadj_{G^*} k$.
\end{itemize}
Finally, if $\bbP$ is also discriminating-faithful to $G^*$, then
\begin{itemize}
    \item[(d)] If $\gamma:=\langle i,\dots,k,j\rangle$ is a discriminating path in both $H$ and $G^*$,
    then $k$ is a collider in $\gamma$ in $H$ iff $k$ is a collider in $\gamma$ in $G^*$.
\end{itemize}
\end{lemma}

\label{appendix:proof-lemma-1}
\begin{proof}
(a) If $i \notadj_H j$, then by the pairwise Markov property \citep{richardson2002ancestral}, $X_i \ci_\bbP X_j \mid X_{\an^*_H({i, j})}$, and by adjacency-faithfulness, $i \notadj_{G^*} j$ in $G^*$.

(b) Let $i \notadj_H j$, so $X_i \ci_\bbP X_j \mid X_{\an^*_H(\{i, j\})}$. Suppose $k$ is a parent of either $i$ or $j$. Since $k \in \an^*_H(\{i, j\})$, $i$ is m-connected to $j$ in $G^*$ given $\an^*_H(\{i, j\})$ by the path $i \rightarrowcirc k \leftarrowcirc j$, and thus $X_i \not\ci_\bbP X_j \mid X_{\an_H(\{i, j\})}$ by orientation faithfulness. Hence, $H$ is not an I-MAP of $\bbP$.

(c) Suppose $i \sim_G k$ and $j \sim_G k$. We have $X_i \ci_\bbP X_j \mid X_{\an_H^*(\{i, j\})}$, and thus by orientation faithfulness $i$ and $j$ are m-separated given $\an^*_H(\{i, j\})$ in $G^*$. Since $H$ is ancestral, $k \not\in \an^*_H(\{i, j\})$. Thus, to ensure the required m-separation in $G$, $k$ must be a collider in $G$ on the path $i - k - j$.

(d) Assume $\gamma = \langle i,C_1,\dots,C_l,k,j\rangle$.
If $k$ is a non-collider in $\gamma$ in $G^*$, then $i$ is m-connected to $j$ given $S$ for every $S$ containing $C_1,\dots,C_l$ but not $k$.
Discriminating faithfulness implies $X_i \not\ci_\bbP X_j \mid X_{S}$ for every such $S$.
Then $k$ must also be a non-collider in $\gamma$ in $H$,
since otherwise there would exist some $S$ containing $C_1,\dots,C_l$ but not $k$ such that $i$ is m-separated from $j$ given $S$ in $H^*$,
contradicting $\mathcal{I}(H) \subseteq \mathcal{I}(\bbP)$.
If $K$ is a collider in $\gamma$ in $G^*$,
then $i$ is m-connected to $j$ given $S$ for every $S$ containing $C_1,\dots,C_l,k$.
Again, discriminating faithfulness implies $X_i \not\ci_\bbP X_j \mid X_S$ for every such $S$.
Then $K$ must also be a collider in $\gamma$ in $H$,
since otherwise there would exist some $S$ containing $C_1,\dots,C_l,k$ such that $i$ is m-separated from $j$ given $S$ in $H^*$.
\end{proof}

We proceed to proving the theorem.
\begin{proof}[Proof of Theorem \ref{thm:sparsest_imap}]
(a) is implied by Lemma \ref{lemma:faithful-imap-relations}(a).

Since restricted faithfulness implies adjacency faithfulness, $\skel(G) = \skel(G^*)$.
It remains to show that $G$ and $G^*$ have the same v-structures, and that if $\gamma$ is a discriminating path for $k$ in both $G$ and $G^*$, then $k$ is a collider on $\gamma$ in $G$ iff it is a collider on $\gamma$ in $G^*$.

Equality of skeletons together with Lemma \ref{lemma:faithful-imap-relations}(b) and (c) imply that $G$ and $H$ have the same v-structures.
If $\gamma:=\langle i, C_1,\dots,C_l,k,j\rangle$ is a discriminating path in both $G^*$ and $G$,
then Lemma \ref{lemma:faithful-imap-relations}(d) implies that $k$ is a collider in $\gamma$ in $G^*$ iff $k$ is a collider in $\gamma$ in $G$.
\end{proof}

\section{Proof of Proposition 2}
\label{proof_prop2}
\begin{proof}
It is sufficient to show this for $G = G^*$, since Markov equivalence implies that $\cI(G) = \cI(G^*).$ 
Suppose $G = (V,D,B)$. 
Let $\pi = \po(G)$. 
We have already shown that $G_\pi$ is an IMAP. Therefore, it is sufficient to show the converse, i.e., that 
if $X_i \ci_\bbP X_j \mid S$ then $i\ci_{G_\pi} j \mid S$.

By Theorem 4.2 of~\cite{richardson2002ancestral}, for any $i,j\in V$ adjacent, $i\not\ci_{G_\pi} j | \an^*_{G_\pi}(i,j).$
The faithfulness condition would then imply that $X_i\not\ci_{\bbP} X_j | X_{\pre^*_\pi(i,j)}.$
\end{proof}

\section{Conjecture Simulations}\label{appendix:sparsity_plot}
In figure~\ref{fig:sparsity}, we display a scatter plot of the number of edges of the graphs that we tested our algorithm on, without failure. The plot includes over 200,000 points, corresponding to 200,000 generated graphs of various parameters. For each of these, graphs, we have tested the oracle version of our algorithm, i.e., $\cI(\bbP) = \cI(G^*)$, and it converged to a graph in the Markov equivalence class of the true graph. We have not found a single counterexample to the conjecture thus far.

%

\begin{figure}[t!]
	\centering
        \includegraphics[width=.98\linewidth]{}
        \caption{Average performance over 100 MAGs for each algorithm, when $p = 50$, $K = 12$, and $s = 3$. Each variant of GSPo was run on 8 $\alpha$ values from $10^{-10}$ to $.7$, and each variant of FCI was run on 7 $\alpha$ values from $10^{-20}$ to $.5$} 
    \label{fig:precision-recall}
\end{figure}

\begin{figure}[t!]
	\centering
        \includegraphics[width=.98\linewidth]{roc_large.png}
        \caption{Average performance over 100 MAGs for each algorithm, when $p = 50$, $K = 12$, and $s = 3$. Each variant of GSPo was run on 8 $\alpha$ values from $10^{-10}$ to $.7$, and each variant of FCI was run on 7 $\alpha$ values from $10^{-20}$ to $.5$} 
    \label{fig:performance-large}
\end{figure}

\begin{figure}[t!]
	\centering
        \includegraphics[width=.98\linewidth]{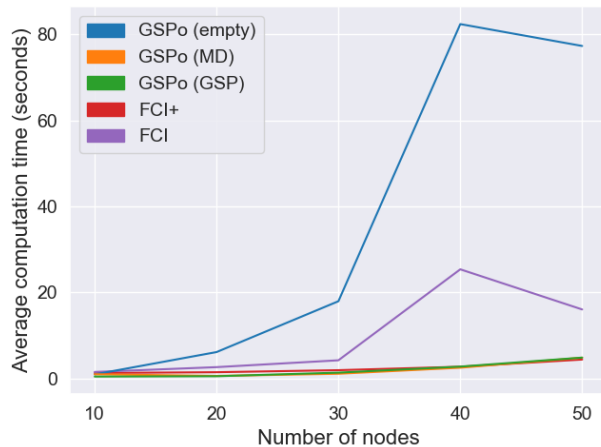}
        \caption{Average runtime over 100 MAGs for $p =$ 10, 20, 30, 40, 50, $K = 3$, and $s =3$. Each variant of GSPo and FCI+ were run with $\alpha=.1$, while FCI was run with $\alpha=10^{-3}$ due to the extremely long runtime of higher $\alpha$ values.} 
    \label{fig:time-mean}
\end{figure}

\begin{figure*}[t]
	\centering
        \includegraphics[width=.98\linewidth]{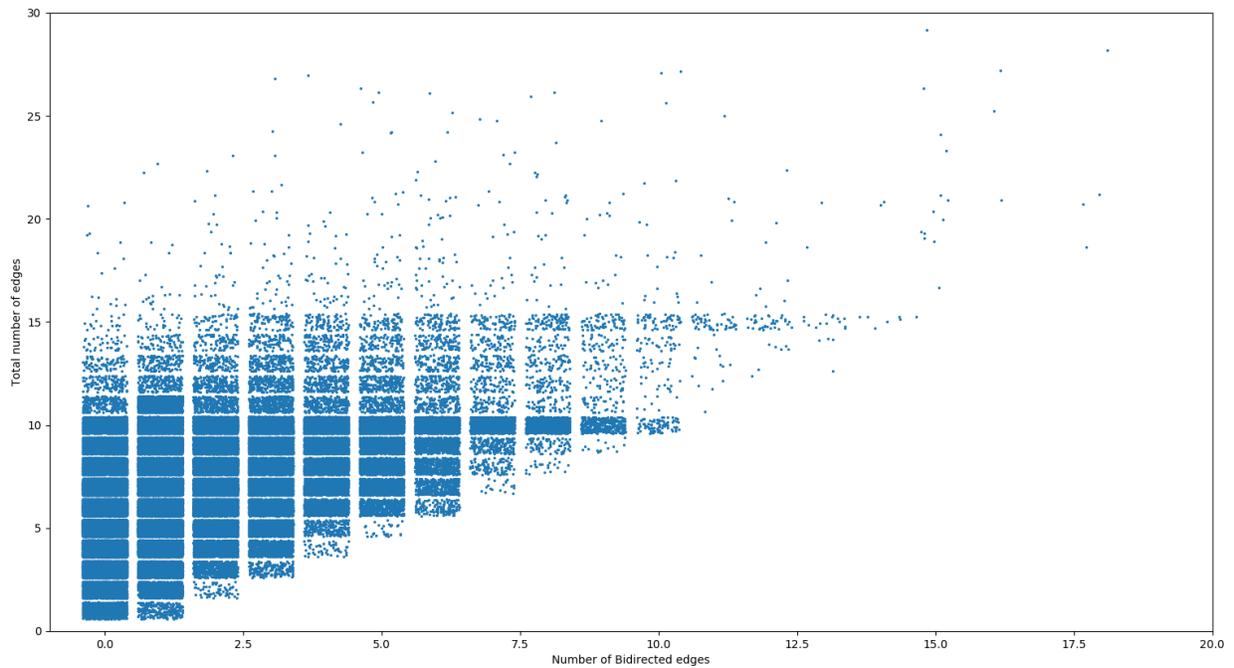}
        \caption{A scatter plot of the number of edges of the graphs that we tested the oracle version of our algorithm on. The plot includes over 200,000 points, representing graphs with varying number of bidirected edges and total number of edges.} 
    \label{fig:sparsity}
\end{figure*}

\section{Additional Simulations}\label{appendix:additional-simultations}

In this section, we followed the same procedure for DMAG sampling procedure as described in Section \ref{section:simulation}. Fig. \ref{fig:precision-recall} gives the precision-recall curve for the same settings as in Fig. \ref{fig:roc} in Section \ref{section:simulation}.

In Figure \ref{fig:performance-large}, we use $p = 50$ nodes, $K = 12$ latent variables, and $s = 3$ expected neighbors per node in the DAG before marginalization. For 100 graphs, we find that this results in MAGs with an average of 43\% bidirected edges, ranging from 14\% to 71\% bidirected edges, and an average of 5 neighbors per node in the MAGs. Due to the slow runtime of FCI, GSPo with empty initialization, and FCI+ with high $\alpha$ values, our comparison between the algorithms for larger graphs is limited, and mainly serves to demonstrate that GSPo has similar performance on larger graphs for the same range of $\alpha$ values.

In Figure \ref{fig:time-mean}, we use the same set of DMAGs as used in \ref{fig:computation-time}, in particular, $p =$ 10, 20, 30, 40, 50, $K = 3$, and $s = 3$, but report the average computation time instead of the median computation time. We can observe that GSPo with the empty initialization and FCI both have much higher average computation times than median computation times, indicating that they are more susceptible to outlier instances from our sampled MAGs.

\bibliographystyle{plainnat}  
\bibliography{references}  